\documentclass[12pt, a4paper]{article}
\usepackage{arxiv}
\usepackage[centertags]{amsmath}
\usepackage{amsfonts}
\usepackage{amssymb}
\usepackage{amscd}
\usepackage{algorithmic}
\usepackage{float}
\usepackage{tikz}
\usetikzlibrary{decorations.pathreplacing}
\usetikzlibrary{arrows.meta}
\usepackage{tikz-cd}
\usepackage{mdframed}
\usepackage[T1]{fontenc}
\numberwithin{equation}{section}
\newtheorem{theorem}{Theorem}[section]
\newtheorem{lemma}{Lemma}[section]
\newtheorem{remark}{Remark}[section]

\newtheorem{proposition}{Proposition}[section]

\newcommand{\DDS}{\mathbf{T}}
\newcommand {\mabs} [1]{\mbox{$\left\bracevert\!\! #1 \!\! \right\bracevert\!$}} 
\newcommand{\vertiii}[1]{{\left\vert\kern-0.25ex\left\vert\kern-0.25ex\left\vert #1 
		\right\vert\kern-0.25ex\right\vert\kern-0.25ex\right\vert}}
\newcommand{\vertiiis}[1]{{\vert\kern-0.25ex\vert\kern-0.25ex\vert #1 
		\vert\kern-0.25ex\vert\kern-0.25ex\vert}}	
\def\vecop {\mathop{\rm vec}\nolimits}	
\newcommand{\Koop}{\mathcal{K}}
\newcommand{\KoopM}{\mathsf{K}}	
\newcommand{\dmd}{\textsf{DMD}}
\newcommand{\edmd}{\textsf{EDMD}}
\newcommand{\kmd}{\textsf{KMD}}
\newcommand{\x}{\mathbf x}
\newcommand{\y}{\mathbf y}

\newcommand{\0}{\mathbf{0}}
\newcommand{\zbA}{\mathsf A}

\newcommand{\zbH}{\mathsf H}

\newcommand{\zbL}{\mathsf L}

\newcommand{\zbM}{\mathsf M}

\newcommand{\zbR}{\mathsf R}
\newcommand{\zbr}{\mathsf r}
\newcommand{\zbW}{\mathsf W}
\newcommand{\zbx}{\mathsf x}
\newcommand{\zbz}{\mathsf z}
\newcommand{\zbF}{\mathsf F}

\newcommand{\zbG}{\mathsf G}

\newcommand{\zbQ}{\mathsf Q}

\newcommand{\zbu}{\mathsf u}

\newcommand{\zbv}{\mathsf v}

\newcommand{\zby}{\mathsf y}
\newcommand{\R}{\mathbb{R}}
\newcommand{\FF}{\mathcal F}
\newcommand{\qr}{\textsf{QR}}
\newcommand{\svd}{\textsf{SVD}}
\newcommand{\basef}{{\boldsymbol\wp}}
\newcommand{\Basef}{{\boldsymbol\varPi}}
\newcommand{\bfkappa}{{\boldsymbol\kappa}}
\newcommand{\uuk}{\bfkappa}
\newcommand{\OX}{\mathsf B_x}

\newcommand{\OY}{\mathsf B_y}

\newcommand{\bfPhi}{{\boldsymbol\Phi}}
\newcommand{\C}{\mathbb{C}}

\newcommand{\Id}{\mathbb I}
\newcommand{\bfr}{\mathbf r}
\newcommand{\bfR}{\mathbf R}
\newcommand{\bfSS}{\mathbf{Q}}

\newcommand{\bfss}{\mathbf{q}}

\def\rank {\mathop{\rm rank}\nolimits}
\newcommand{\resolvent}{{\boldsymbol\rho}}
\newcommand{\bfsigma}{{\boldsymbol\sigma}}
\newcommand{\bfdelta}{{\boldsymbol\delta}}
\newcommand{\bftau}{{\boldsymbol\tau}}
\newcommand{\GramM}{\boldsymbol{\Gamma}}
\newcommand{\ei}{\mathsf{e}}
\title{On residual bounds of the EDMD solution to the eigenvalue problem for the Koopman operator and backward shadowing stability of the EDMD/KMD}
\author{Zlatko Drma\v{c}\thanks{University of Zagreb, Faculty of Science, Department of Mathematics, Bijeni\v{c}ka 30, 10000 Zagreb, Croatia. The author is supported by the Croatian Science Foundation (HRZZ) Grant IP-2025-02-3733 \emph{``Data driven identification and dimension reduction of dynamical systems''}
and in parts by the European Union – NextGenerationEU through the National Recovery and Resilience Plan 2021-2026,  institutional grant of University of Zagreb Faculty of Science (IK IA 1.1.3. Impact4Math) and the \emph{``Implementation of cutting-edge research and its application as part of the Scientific Center of Excellence for Quantum and Complex Systems, and Representations of Lie Algebra''},  Grant No. PK.1.1.10.0004, co-financed by the European Union through the European Regional Development Fund - Competitiveness and Cohesion Programme 2021-2027.}}

\begin{document}

\maketitle

\begin{abstract}
This paper introduces shadowing theory based backward stability analysis of data driven computational
study of discrete dynamical systems $\x_{k+1}=\DDS(\x_k)$ in the framework of the Koopman composition operator $\Koop$, the Extended Dynamic Mode Decomposition (\edmd{}) and the Koopman Mode Decomposition (\kmd{}). 
The proposed framework establishes the missing theoretical link between operator approximation error in \edmd{} and trajectory--level accuracy through the theory of shadowing.
Data driven spectral analysis of the dynamics using the approximate (computed) eigenpairs of $\Koop$ is cast in terms of the backward error analysis. The individual residuals $\zbr_i=\Koop\phi_i-\lambda_i\phi_i=\phi_i\circ\DDS-\lambda_i\phi_i$ of the computed eigenpairs $(\lambda_i,\phi_i)$, $i=1,2, \ldots$, are fused into a backward perturbation $\Delta\Koop$, so that the computed eigenpairs are exact for $\Koop-\Delta\Koop$: $(\Koop-\Delta\Koop)\phi_i=\lambda_i\phi_i$. Data driven formula for computing approximate operator norm of $\Delta\Koop$ is provided. Then, the impact of $\Delta\Koop$ is carried over to the map $\DDS$ and the initial condition $\x_1$, showing that the computed approximations and \kmd{} based forecast of an observable of interest correspond exactly to its values along a pseudo--trajectory of $\DDS$. In the final step,  the pseudo--trajectory is shadowed by an exact trajectory of $\DDS$.  This interpretation of errors is in particular suitable in data driven scenarios where the data is contaminated by noise.
New insights into numerical shadowing and an extension of the numerical shadowing lemma are presented.
\end{abstract}

\section{Introduction}
Many phenomena in applied sciences and engineering are modeled as discrete dynamical system
\begin{equation}\label{eq:DDS}
	\x_{k+1}=\DDS(\x_k), 
\end{equation}
where $\DDS : \mathcal{X} \longrightarrow \mathcal{X}$ is a map on a state space $\mathcal{X}\subseteq\R^n$.   The mapping $\DDS$ may or may not be known. If it is known, then many trajectories can be generated with various initial conditions, e.g. by numerical simulations of semi--discretized partial differential equations, and the structure of the dynamics can be studied. This can provide valuable inputs for real world applications as well as for theoretical considerations. In another setting, the trajectory data is experimental. Examples  include quantification of images obtained from  high speed camera recordings of a combustion process in a turbine \cite{Yang-combustion-2020}, 
quantitative measurement of unsteady surface pressure distribution on aerodynamic bodies using fast Pressure--Sensitive Paint (PSP) optical sensors \cite{Ali-sensitive-paint-2016}, wind tunnel Particle Image Velocimentry (PIV) measurements \cite{Rowley-etal-PIVdata-2014}, numbers of cases of COVID--19 infections, recoveries and deaths, reported daily in administrative units of a state (cities, counties) \cite{Mezic-etal-Covid19-2024}. Here, the goal can be to identify the dynamics from the data, e.g. for model predictive control. 

The dynamics can be studied using the observables of the system, i.e. functions of the states $f(\x_1)$, $f(\x_2)=f(\DDS(\x_1))$, $f(\x_3)=f(\DDS(\x_2))=f(\DDS(\DDS(\x_1))), \ldots$ of (\ref{eq:DDS}). This can be formalized by choosing the observables from a suitable Hilbert space $\FF$ and defining the composition operator $\Koop$ with the domain $\mathcal{D}(\Koop)\subseteq\FF$ by $\Koop f = f\circ\DDS$. 
Some details, depending on the properties of the mapping $\DDS$ and the underlying space $\FF$, are needed to ensure that $\Koop$ is well defined and that $\Koop f\in\mathcal{D}(\Koop)$, so that the powers $\Koop f, \Koop^2 f, \ldots, \Koop^k f, \ldots$ are meaningful.
It is obvious that $\Koop$ is linear.  

Now, evaluating an observable $f$ along the trajectory of (\ref{eq:DDS}) can be described by evaluating the Krylov sequence $f, \Koop f, \Koop^2 f, \ldots$ at the initial condition $\x_1$.  This remarkably simple idea has far reaching consequences. Although defined using the nonlinear function $\DDS$, $\Koop$ is linear, and studying (\ref{eq:DDS}) by considering observables $f\in\FF$ and how they change under $\Koop$ is linearization of the problem. It seems like a winning trade, but it comes at a cost -- $\Koop$ is defined on the infinite dimensional function space $\FF$ and in practice accessible only through the values $(\Koop^k f)(\x_1)$ for selected observables $f$ and initial conditions $\x_1$. Sometimes multiple trajectories, with different initial conditions, are available. Given the experience from the linear theory, the spectral data of $\Koop$  are of immediate interest. If $\Koop\phi_i\approx\lambda_i\phi_i$, then the approximate eigenfunction $\phi_i\neq \0$ satisfies $\Koop^k\phi_i (\x_1) \approx \lambda_i^k\phi(\x_1)$, i.e. $\phi_i$ is very particular observable for the system (\ref{eq:DDS}) because $\phi_i(\x_2)\approx \lambda_i\phi_i(\x_1)$, $\phi_i(\x_3)\approx \lambda_i^2\phi_i(\x_1)$, $\ldots, \phi_i(\x_{k+1})\approx\lambda_i^k\phi_i(\x_1)$.

The operator $\Koop$ is called the Koopman (composition) operator. It was introduced in 1931 by Bernard Osgood Koopman \cite{Koopman-1931} in the context of classical Hamiltonian mechanics; this work was followed by the first elements of the spectral theory  developed by Koopman and von Neumann \cite{Koopman-vonNeumann-1932}. The composition operator had been studied already by Ernst Schr\"{o}der \cite{Schroeder-composition-1870} in 1870 and Gabriel Xavier Paul Koenigs \cite{Koenighs-compOp-1884} in 1884. In particular, Schr\"{o}der had studied functional equation $f\circ\DDS = \lambda f$, which is the eigenvalue problem $\Koop f=\lambda f$. For a historical review see \cite{Shapiro-CompHistory-1998}.

Suppose that $\ell$ approximate  eigenpairs $(\lambda_i,\phi_i)$ of $\Koop$ are available and that an observable $f$ is successfully well approximated at $\x_1$ as the superposition $f(\x_1)\approx \sum_{i=1}^\ell\alpha_i\phi_i(\x_1)$. Then, for each $k$, $f(\x_k)=(\Koop^{k-1}f)(\x_1)\approx \sum_{i=1}^\ell\alpha_i\lambda_i^{k-1}\phi_i(\x_1)$. This decomposition, called the Koopman Mode Decomposition (\kmd{}), reveals the latent structure of the nonlinear dynamics and allows for forecasting. Hence, finding (at least approximate) eigenpairs of $\Koop$ would be particularly rewarding for computational data driven analysis of the dynamics. For the theory and applications of the Koopman operator to dynamical systems the reader is referred to \cite{Mezic-Spectral-MOR-2005}, \cite{ROWLEY_MEZIC_BAGHERI_SCHLATTER_HENNINGSON_2009},  \cite{mezic_annual_reviews-2013}, \cite{tu-rowley-dmd-theory-appl-2014},   \cite{Mezic-OT-Spectra-2016}, \cite{Mezic-Koop-Spectrum-2020}, \cite{Mezic-Koop-Learning-DS-AMS-2021}, \cite{Colbrook-Drmac-Horning-2025}, and for a review of methods and applications to \cite{Akshay2021}, \cite{brunton-budisic-kaiser-kutz-sirew-2022}.

The eigenpairs of $\Koop$ are computed from its finite dimensional $N\times N$ compression onto suitable $N$--dimensional subspace $\FF_N\subset\FF$ using the Extended Dynamic Mode Decomposition (\edmd{}) \cite{williams-2015-EDMD},  \cite{Williams-etal-Kernel-based-2015}.  The matrix representation of the compression is in a data driven setting computed by solving certain weighted least squares problem that mimics computing $L^2$ projections. \edmd{} builds upon the Dynamic Mode Decomposition (\dmd{}) \cite{Schmid:2008wv}, \cite{Schmid:2010ba} and the matrix representations used in the two methods are transposes of each other. Convergence of \edmd{} is established in  \cite{Korda-Mezic-EDMD-convergence-2018}.

From the computational point of view, there are two main difficulties, even before resorting to numerical methods and software solutions for data driven applications in finite precision computer arithmetic: \\
\emph{(i)} Finite dimensional approximation of eigenpairs of linear operators defined on an infinite dimensional Hilbert or Banach space is difficult task, and there is a risk that spurious eigenvalues might be misidentified as the true eigenvalues (spectral pollution). Important targeted genuine eigenvalues might be missed (spectral invisibility).\\
\emph{(ii)} Once a finite dimensional compression of the operator is constructed, its matrix may be defined by the data only on a subspace (as in the \dmd{} when relatively small number of high dimensional data snapshots is available) so that the Rayleigh--Ritz approximation from a subspace can compute only approximate eigenpairs -- some of them are accurate, while others are spurious.

In numerical linear algebra, approximate eigenpairs are tested by computing the residuals.
The importance of using residual bounds in the Koopman analysis framework  was first  stressed in \cite{Drmac-Mezic-Mohr-EnhancedDMD-2018} in the context of computing the Koopman modes in the \dmd{}, and in \cite{Colbrook-Townsend-Rigor-2024}, \cite{Colbrook_Ayton_Szoke_2023}, \cite{Colbrook-AnotherLook-2024} in the context of eigenpairs of $\Koop$. 
In practice this means that each computed eigenpair $(\lambda_i,\phi_i)$ is tested by computing the norm of the residual $\zbr_i=\Koop\phi_i-\lambda_i\phi_i$, and $\ell\leq N$ pairs with the residual norm below given threshold are selected. More advanced techniques include e.g. probabilistic estimates of the trustworthiness of the computed eigenvalues \cite{Wormell-2026}. 


Despite these extensive computational and approximation studies, the theoretical connection between the data driven finite--dimensional spectral approximations of $\Koop$ and the computed long--term behavior/forecast  of observables along the trajectories of the system (\ref{eq:DDS}) remains incomplete. For, the original problem is not the eigenvalue problem for $\Koop$, but to study observables of (\ref{eq:DDS}). 
This work addresses this gap by establishing a new
framework that connects approximation errors of the $(\lambda_i,\phi_i)$'s (expressed through the residuals $\zbr_i$), the computed \kmd{} and forecast of the observable of interest, and the values of the observable along a nearby true trajectory.

\subsection{The proposed framework}

When it comes to practical application of the Koopman composition operator in the data driven framework, an additional level of difficulty is that the data (that can be sensor readings) is noisy. If the input to an algorithm is noisy, then even the exact computation cannot guarantee that the result is accurate for the intended but inaccessible exact input. The data necessary for the analysis are taken from a trajectory that is hopefully close to the true but inaccessible trajectory. Furthermore, evaluation of an observable along the given approximate trajectory may not be error free. On top of that, a particular numerical algorithm commits unavoidable computation errors (discretization, truncation and rounding errors when computing matrix spectral and the \svd{} decompositions). How accurate is the output and how to justify and use it with confidence in mission critical applications?

In numerical linear algebra, the computed output is often justified by proving that it is the result of error--free computation but with noise, called \emph{backward error}, added to the input. This noise is artificially constructed from the errors of a concrete numerical algorithm using the \emph{backward error analysis} -- if the backward error is comparable in size with the estimated initial uncertainty in the data, then the output of the algorithm is considered as good as one can hope for. 

To be useful and interpretable, the backward error must be expressed in  terms of the given problem. For studying the observables of (\ref{eq:DDS}) using \edmd{}/\kmd{}, we propose a new framework called \emph{backward shadowing analysis} -- coupling of backward error analysis, that addresses computing the approximate eigenpairs $(\lambda_i,\phi_i)$ of $\Koop$, and the \emph{shadowing theory} that pushes the \edmd{} error into the trajectory of (\ref{eq:DDS}). 

\subsubsection{Main results}
The first step in this {backward shadowing analysis} is to show that the selected approximate eigenpairs $(\lambda_i,\phi_i)$, $i=1,\ldots, \ell$, of $\Koop$ are exact for $\Koop-\Delta\Koop$, where $\Delta\Koop$ is small if all residuals $\zbr_i=\Koop\phi_i-\lambda_i\phi_i$ are small. This follows by a direct application of well known techniques from numerical linear algebra. The operator $\Delta\Koop$ is the backward error that makes approximate eigenpairs exact for $\Koop-\Delta\Koop$. While useful for spectral perturbation theory of $\Koop$, this information is not satisfactory in the context of computational analysis of the given dynamical system: $\Koop-\Delta\Koop$ is in general not a composition operator and it cannot be associated with the original dynamics (\ref{eq:DDS}).
The problem of proper interpretation of the computed result escalates if $\Koop$ has additional properties, like unitarity which corresponds to measure preserving systems with important interpretation in terms of the underlying physics of the problem. Hence, using the $(\lambda_i,\phi_i)$'s immediately breaks the connection to $\DDS$ that defines the original problem (\ref{eq:DDS}), which precludes justification of the results in terms of backward error analysis.

We resolve this issue using the following \emph{shadowing trick in two steps}: First, the powers of 
$\Koop-\Delta\Koop$ applied to an observable $f$ and evaluated at $\x_1$ are interpreted as evaluations of that observable along a pseudo--trajectory of (\ref{eq:DDS}). Then, that pseudo--trajectory is shadowed by a true trajectory of (\ref{eq:DDS}). This yields, to the best of our knowledge, the first backward stability result for the application of the \edmd{}/\kmd{} for the Koopman operator based analysis and forecasting of dynamical systems.

Our proposed framework synergizes, in the concrete example of \edmd{}/\kmd{}, the backward error analysis and the shadowing theory -- a connection anticipated by Stuart \cite[\S 7.2]{Stuart-NumAn-DynSys-1994} who considered them important concepts with increasingly important role in computational analysis of dynamical systems.

Finally, as a by--product of the analysis, we also obtain a refinement of the numerical shadowing lemma, that guarantees existence of true shadowing trajectory which intersects the pseudo--trajectory at certain number of indices. 

\subsubsection{Organization of the paper}
The paper focuses on the conceptual foundation of a new framework for the numerical analysis of the \edmd{}/\kmd{}, leaving many technical details and challenging research problems for future work. It is organized as follows. Section \ref{S=Preliminaries} sets the stage -- it introduces the Koopman operator for discrete dynamical systems and explains the details of the data driven compression (\S \ref{SS=Compresionmatrix}) and computation of approximate eigenpairs (\S \ref{SS=AprroxEigenpairs}). Section \ref{SEC:ResidsBackwardpert} explains the construction of the backward error $\Delta\Koop$ for the general case of linear operator $\Koop$ on Hilbert space. 
Section \ref{SS=BackErrShadowOpenPr} specifies $\Koop$ as the composition operator attached to the dynamics (\ref{eq:DDS}) and takes on the problem of proper interpretation of $\Delta\Koop$, because $\Koop-\Delta\Koop$ is not the Koopman operator of (\ref{eq:DDS}). The proposed approach, outlined in \S \ref{SS=ConceptualFramework}, is in the framework of the shadowing theory. 
Some new insights into the numerical shadowing are given in \S \ref{SSS=ShadowCOntactBackStable}. The new proposed concept of backward shadowing stability of the \edmd{} is outlined in \S \ref{S=MixedBackShadowEDMD}. Practical data driven formulas to approximate the residuals and the operator norm of $\Delta\Koop$ are given in \S \ref{S=DataDrivenResids}. Concluding remarks and discussion of future research along the ideas from this work are in \S \ref{S=ConcludingRem}.

\section{Preliminaries}\label{S=Preliminaries}
The ambient space $\FF$ of observables for applications of $\Koop$ 
is separable Hilbert space with the inner product $\langle \cdot,\cdot \rangle$ and the  norm $\|\cdot\|=\sqrt{\langle \cdot,\cdot \rangle}$. The induced operator norm is denoted by $\vertiii{\cdot}$. Natural setting for many applications is $\FF=L^2(\mathcal{X},\mu)$, with forward invariant $\mathcal{X}$ and an appropriate measure $\mu$, or reproducing kernel Hilbert space (RKHS) with judicious choice of kernel.

Finite dimensional approximation of $\Koop$ begins with the selection of 
$N$ dimensional subspace $\FF_N\subset \FF$, which is given as the linear span of suitably selected dictionary (basis) $\mathcal{B}=\{\basef_1, \ldots, \basef_N\}$. 
In theory, with well selected $\mathcal{B}$ and large enough dimension $N$, replacing $\Koop$ with its compression mitigates the problem of infinite dimension. Convergence, in the framework of operator approximation theory, can be analyzed as $N\longrightarrow\infty$, see e.g. \cite{Arveson1992CAlgebrasAN}, \cite{Arveson-1993}, \cite{Chatelin-SALO-2011},  \cite{Hansen-Approx-spectra-2008}.

In data driven scenarios, the problem is more difficult because only a finite number $M$ of data snapshot pairs $(\x_k,\y_k=\DDS(\x_k))$, $k=1,\ldots, M$, is available and all functions/observables are defined only as finite tables of values. For proper convergence analysis, in addition to $N\longrightarrow\infty$, the limit $M\longrightarrow\infty$ must be considered.

\subsection{Data driven compression of $\Koop$ and its matrix representation}\label{SS=Compresionmatrix}
Numerical data driven method for working with the compression of $\Koop$ to $\FF_N$  is an adaptation of the construction of matrix representation of a linear operator. Consider $\Koop \basef_i = \basef_i \circ\DDS$ and  split it as the sum of the component belonging to $\FF_N$, and the residual, 
\begin{equation}\label{eq:Utpi(x)}
	(\Koop \basef_i)(\zbx) = \basef_i(\DDS(\zbx)) = \sum_{j=1}^N \uuk_{ji} \basef_j(\zbx) + \rho_i(\zbx), \;\;i=1,\ldots, N.
\end{equation}
Projecting $\Koop\basef_i$ back to $\mathcal{F}_N$ means finding the coefficients $\uuk_{ji}$ to 
minimize the norm of the residual, i.e. to make the residual orthogonal to $\mathcal{F}_N$. The $\uuk_{ji}$'s define the matrix representation $\KoopM_N$ of the compression.
If the only available data are the snapshot pairs $(\x_k,\y_k=\DDS(\x_k))$, 
then it is only feasible to minimize the norms of the residuals
\begin{equation}\label{eq:resid-rhoixk}
	\rho_i(\x_k) = (\Koop \basef_i)(\x_k) - \sum_{j=1}^N \uuk_{ji} \basef_j(\x_k)= \basef_i(\y_k) - \sum_{j=1}^N \uuk_{ji} \basef_j(\x_k),\;\;k=1,\ldots, M, 
\end{equation}
over the $\x_k$'s, i.e. to minimize 
\begin{eqnarray}
	\!\!\!\!\!\!\!\!\!\!\!\! && \sum_{k=1}^M w_k | \sum_{j=1}^N \uuk_{ji} \basef_j(\x_k) - \basef_i(\y_k)|^2 
	= \nonumber \\
	\!\!\!\!\!\!\!\!\!\!\!\! && =
	\left\| \zbW^{\frac{1}{2}}  \left[ 
	\begin{pmatrix} \basef_1(\x_1)  & \ldots & \basef_N(\x_1) \cr
		\vdots & \ldots & \vdots \cr
		\basef_1(\x_M)  & \ldots & \basef_N(\x_M)\end{pmatrix}
	\begin{pmatrix} \uuk_{1i}\cr \vdots \cr \uuk_{Ni}\end{pmatrix} -  
	\begin{pmatrix} \basef_i(\y_1) \cr
		\vdots  \cr
		\basef_i(\y_M) \end{pmatrix}
	\right]\right\|_2^2 \!\!. \label{eq:LS:Upi}
\end{eqnarray}
In the $L^2$ setting, this is data driven minimization of the residual
$\int_{\mathcal{X}}|\rho(\zbx)|^2d\mu(\zbx)$. The weight matrix $\zbW$ can be determined by a cubature formula and rigorous error bounds are possible, see e.g. 
\cite{Colbrook-Townsend-Rigor-2024}, \cite{Colbrook_Ayton_Szoke_2023}.
If the values of the basis functions on the data are arranged in the matrices $\OX$, $\OY$ as  $(\OX)_{ki}=\basef_i(\x_k)$, $(\OY)_{ki}=\basef_i(\y_k)$, then the solution of this weighted least squares problem can be compactly written as
\begin{eqnarray*}
	\KoopM_N(:,i) &=&	\begin{pmatrix} \uuk_{1i}\cr \vdots \cr \uuk_{Ni}\end{pmatrix} = (\OX^* \zbW \OX)^{-1}\OX^* \zbW \begin{pmatrix} \basef_i(\y_1) \cr
		\vdots  \cr
		\basef_i(\y_M) \end{pmatrix} = [\bfPhi_N \Koop\basef_i]_{\mathcal{B}} \\ 
	&=& (\OX^* \zbW \OX)^{-1}\OX^* \zbW \begin{pmatrix} \basef_i(\y_1) \cr
		\vdots  \cr
		\basef_i(\y_M) \end{pmatrix} = (\OX^* \zbW \OX)^{-1}\OX^* \zbW \OY(:,i) 
	= [\bfPhi_N \Koop_{|\mathcal{F}_N}]_{\mathcal{B}}(:,i),
\end{eqnarray*}
where $\bfPhi_N$ is the projection in the sense of the (algebraic) least squares, and $[\cdot]_{\mathcal{B}}$ denotes representation of vectors/operators in the basis $\mathcal{B}$.
Each residual $\rho_i$ is in the discrete inner product orthogonal to the entire $\mathcal{F}_{N}$, i.e. 
\begin{equation}\label{eq:rhoi-orthog}
	\sum_{k=1}^M w_k \rho_i(\x_k) \overline{\basef_j(\x_k)} = 0,\;\;j=1,\ldots, N.
\end{equation}
Then, for any $f=\sum_{i=1}^N \mathsf{f}_i \basef_i\in\FF_N$, 
\begin{eqnarray*}
	g(\zbx)\!\! &=& \!\!(\Koop f)(\zbx) = \sum_{i=1}^N \mathsf{f}_i \left[ \sum_{j=1}^N \uuk_{ji} \basef_j(\zbx) + \rho_i(\x) \right] = \sum_{j=1}^N \basef_j(\x) \sum_{i=1}^N \uuk_{ji}\mathsf{f}_i + \sum_{i=1}^N \mathsf{f}_i\rho_i(\zbx) \\
	&=&\!\! \sum_{j=1}^N \basef_j(\zbx) \mathsf{g}_j + \sum_{i=1}^N \mathsf{f}_i\rho_i(\zbx),\;\;\mbox{where}\;\; \mathsf{g}_j=\sum_{i=1}^N \uuk_{ji}\mathsf{f}_i .
\end{eqnarray*}
It should be noted that $\sum_{i=1}^N \mathsf{f}_i\rho_i(\zbx)$ is orthogonal to $\mathcal{F}_{N}$ in the discrete inner product; see (\ref{eq:rhoi-orthog}). This confirms $\sum_{j=1}^N \basef_j(\zbx) \mathsf{g}_j$ as the projection (in the discrete sense of the algebraic least squares) of $\Koop f(\zbx)$ onto $\mathcal{F}_N$. To summarize:

\begin{proposition}\label{PROP-KFN}
	The data driven operator compression $\bfPhi_N \Koop_{|\mathcal{F}_N} : \mathcal{F}_N \longrightarrow \mathcal{F}_N$ is in the basis $\mathcal{B}$ represented by the matrix
	\begin{eqnarray}\label{eq:UN}
		\!\!\! &&[\bfPhi_N \Koop_{|\mathcal{F}_N}]_{\mathcal{B}} =  (\OX^* \zbW \OX)^{-1}\OX^* \zbW \OY \equiv \KoopM_N ,\\
		\!\!\! && \bfPhi_N \Koop_{|\mathcal{F}_N} \begin{pmatrix} \basef_1(\zbx) & \ldots & \basef_N(\zbx)\end{pmatrix}\begin{pmatrix}\mathsf{f}_1 \cr \vdots\cr \mathsf{f}_N\end{pmatrix} = \begin{pmatrix} \basef_1(\zbx) & \ldots & \basef_N(\zbx)\end{pmatrix} (\KoopM_N \begin{pmatrix}\mathsf{f}_1 \cr \vdots\cr \mathsf{f}_N\end{pmatrix}) ,\nonumber
	\end{eqnarray}
	i.e. the operator $\bfPhi_N \Koop_{|\mathcal{F}_N}$ is in the basis $\mathcal{B}$ 
	represented by the matrix as a linear operator on $\C^N$ defined by 
	$$
	\begin{pmatrix}\mathsf{f}_1 \cr \vdots\cr \mathsf{f}_N\end{pmatrix} \longrightarrow 
	\begin{pmatrix}\mathsf{g}_1 \cr \vdots\cr \mathsf{g}_N\end{pmatrix} = 
	\KoopM_N \begin{pmatrix}\mathsf{f}_1 \cr \vdots\cr \mathsf{f}_N\end{pmatrix} .
	$$
	In the simpler unweighted case with $\zbW=\Id_M$, $\KoopM_N = (\OX^* \OX)^{-1} \OX^* \OY = \OX^\dagger \OY$.
	Further, the action of $\Koop$ on $f=\sum_{i=1}^N \mathsf{f}_i \basef_i\in\FF_N$ is
	\begin{equation}\label{eq:Kfplusrho}
		(\Koop f)(\zbx) = \begin{pmatrix} \basef_1(\zbx) & \ldots & \basef_N(\zbx)\end{pmatrix} (\KoopM_N \begin{pmatrix}\mathsf{f}_1 \cr \vdots\cr \mathsf{f}_N\end{pmatrix}) + \begin{pmatrix}
			\rho_1(\zbx) & \ldots & \rho_N(\zbx)
		\end{pmatrix}\begin{pmatrix}\mathsf{f}_1 \cr \vdots\cr \mathsf{f}_N\end{pmatrix}.
	\end{equation}
\end{proposition}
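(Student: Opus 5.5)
The proposition follows from the columnwise least squares construction just carried out, so the plan is to assemble those pieces rather than prove anything deep. Throughout I take the weighted Gram matrix $\OX^*\zbW\OX$ to be nonsingular, i.e.\ $\OX$ has full column rank and $\zbW$ is positive definite. Without this the least squares solution is not unique and the formula (\ref{eq:UN}) is meaningless.

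\textbf{Step 1 (columns of $\KoopM_N$).} For each $i$, minimizing (\ref{eq:LS:Upi}) is the weighted linear least squares problem $\min_{\mathsf c}\|\zbW^{1/2}(\OX\mathsf c-\OY(:,i))\|_2$. Its normal equations are $\OX^*\zbW\OX\,\mathsf c=\OX^*\zbW\OY(:,i)$, which have the unique solution $\mathsf c=(\OX^*\zbW\OX)^{-1}\OX^*\zbW\OY(:,i)$. Stacking these columns for $i=1,\ldots,N$ gives (\ref{eq:UN}). When $\zbW=\Id_M$ the solution reduces to $(\OX^*\OX)^{-1}\OX^*\OY=\OX^\dagger\OY$, using the standard identity for the pseudoinverse of a full column rank matrix. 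The normal equations are equivalent to $\OX^*\zbW(\OY(:,i)-\OX\mathsf c)=\0$, and this is exactly the discrete orthogonality (\ref{eq:rhoi-orthog}) of $\rho_i$ to $\FF_N$.

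\textbf{Step 2 (expansion of $\Koop f$).} Write $f=\sum_i\mathsf f_i\basef_i\in\FF_N$. By linearity of $\Koop$ and the splitting (\ref{eq:Utpi(x)}),
\[
\Koop f=\sum_i\mathsf f_i\Bigl(\sum_j\uuk_{ji}\basef_j+\rho_i\Bigr)=\sum_j\basef_j(\KoopM_N\mathsf f)_j+\sum_i\mathsf f_i\rho_i .
\]
In row-vector notation this is (\ref{eq:Kfplusrho}).

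\textbf{Step 3 (identifying the projection).} The term $\sum_i\mathsf f_i\rho_i$ is a linear combination of the $\rho_i$, so by (\ref{eq:rhoi-orthog}) it is discretely orthogonal to every $\basef_j$. Hence $\sum_j\basef_j\mathsf g_j$ with $\mathsf g=\KoopM_N\mathsf f$ is the discrete least squares projection $\bfPhi_N\Koop f$. This projection is well defined because nonsingularity of the Gram matrix makes the discrete inner product definite on $\FF_N$, and that also makes the decomposition unique. Therefore $\bfPhi_N\Koop_{|\FF_N}$ acts on coordinate vectors as $\mathsf f\mapsto\KoopM_N\mathsf f$, which is the stated matrix representation.

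The only delicate point is Step 3. The operator $\bfPhi_N$ has to be understood as the algebraic discrete projection, not the $L^2$ projection in $\FF$. Its well-definedness and the uniqueness of the coefficients both rest on the full column rank assumption on $\OX$, so I would state that hypothesis explicitly at the start of the proof.
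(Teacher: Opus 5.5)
Your proposal is correct and follows essentially the same route as the paper: the columns of $\KoopM_N$ come from the weighted least squares problem, its normal equations give the discrete orthogonality (\ref{eq:rhoi-orthog}) of the $\rho_i$, and the linear expansion of $\Koop f$ then identifies $\sum_j \basef_j\mathsf{g}_j$ as the discrete projection onto $\FF_N$. Your explicit uniqueness argument via nonsingularity of $\OX^*\zbW\OX$ simply makes precise the full-rank assumption that the paper uses tacitly; strictly speaking, full column rank of $\OX$ together with a positive definite $\zbW$ is sufficient for that nonsingularity, not equivalent to it.
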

\noindent For convergence results in the case of bounded $\Koop$ see \cite{Korda-Mezic-EDMD-convergence-2018}, \cite{Korda-Putinar-Mezic-2020}. In \cite{Llamazares-Elias-convergence-2026}, $\Koop$ is assumed closed and then the underlying Hilbert space structure on the operator domain $\mathcal{D}(\Koop)$ is changed by using the graph inner product $\langle f,g\rangle_{\mathfrak{g}}=\langle f,g\rangle + \langle \Koop f, \Koop g\rangle$ and the corresponding norm $\|f\|_{\mathfrak{g}}^2 = \|f\|^2+\|\Koop f\|^2$. Then $\Koop: (\mathcal{D}(\Koop),\langle\cdot,\cdot,\rangle_{\mathfrak{g}})\longrightarrow (\FF,\langle\cdot,\cdot\rangle)$ is bounded, $\|\Koop f\| \leq \|f\|_{\mathfrak{g}}$ for all $f\in \mathcal{D}(\Koop)$, and the corresponding induced norm is $\vertiiis{\Koop}_{\mathfrak{g}}=\sup_{f\neq\0}\|\Koop f\|/\|f\|_{\mathfrak{g}} \leq 1$.

In the sequel, to minimize technical details, the tacit assumption is that $M\gg N$ and that $\OX$ is of full numerical rank $N$. (In case of numerical rank $r<N$ of $\OX$, the matrix representation of the corresponding compression of $\Koop$ is $r\times r$ $\KoopM_r$, which corresponds to using an $r$--dimensional subspace of $\FF_N$.)

\subsection{Approximate eigenpairs}\label{SS=AprroxEigenpairs}
In the generic case, $\KoopM_N$ is diagonalizable, i.e.
$\KoopM_N = \bfSS \Lambda \bfSS^{-1}$, with $\Lambda=\mathrm{diag}(\lambda_i)_{i=1}^N$, $\bfSS=(\bfss_1,\ldots,\bfss_N)$. The columns of $\bfSS$ are the eigenvectors, $\KoopM_N \bfss_i=\lambda_i \bfss_i$, and they are normalized so that $\|\bfss_i\|_2=1$. Then, based on the spectral decomposition of $\KoopM_N$, the eigenvalues and the eigenfunctions \index{eigenfunction} of $\Koop$ can be approximated from the linear span of $\basef_1,\ldots, \basef_N$ as follows.

Inserting the spectral decomposition of $\KoopM_N$ in the formula for the action of $\Koop$ on the dictionary, together with (\ref{eq:Utpi(x)}), 
gives\footnote{This notation can be interpreted using
	$\infty \times N$ quasi matrices \cite{Trefethen-HQR-quasim-2010}, \cite{Townsend-Trefethen-Cont-Matr-2015}.}
\begin{equation}\label{eq:UfV}
	\begin{pmatrix}
		\Koop\basef_1(\zbx) & \ldots & \Koop\basef_N(\zbx)
	\end{pmatrix}  =  \begin{pmatrix}
		\basef_1(\zbx) & \ldots & \basef_N(\zbx)
	\end{pmatrix} \bfSS\Lambda \bfSS^{-1} + \begin{pmatrix}
		\rho_1(\zbx) & \ldots & \rho_N(\zbx)
	\end{pmatrix},
\end{equation}
and multiplying this from the right with $\bfSS$ gives (using the linearity of $\Koop$)
\begin{eqnarray}\label{eq:UfV-2}
	\!\!\!\!\!\!\!	&& \begin{pmatrix}
		\Koop (\sum_{j=1}^N \bfSS_{j1}\basef_j(\zbx)) & \ldots & \Koop(\sum_{j=1}^N \bfSS_{jN}\basef_j(\zbx))
	\end{pmatrix}  \\ 
	\!\!\!\!\!\!\!	&& = \!  \begin{pmatrix}
		\sum_{j=1}^N\! \bfSS_{j1}\basef_j(\zbx) & \ldots & \sum_{j=1}^N\! \bfSS_{jN}\basef_j(\zbx)
	\end{pmatrix}\!\! \begin{pmatrix} \lambda_1 & & \cr 
		& \ddots & \cr & & \lambda_N\end{pmatrix} + \begin{pmatrix}
		\rho_1(\zbx) & \ldots & \rho_N(\zbx)
	\end{pmatrix}\!\bfSS .\nonumber
\end{eqnarray}
(This relation follows by direct application of the second part of Proposition \ref{PROP-KFN}.)
Hence, the functions $\phi_1, \ldots, \phi_N$ defined by 
\begin{equation}\label{eq-eigfun}
	\begin{pmatrix}
		\phi_1(\zbx)  & \ldots &  \phi_N(\zbx)
	\end{pmatrix} =  \begin{pmatrix}
		\basef_1(\zbx) & \ldots &  \basef_N(\zbx)
	\end{pmatrix}\bfSS ,\;\mbox{i.e.}\; \phi_i = \sum_{j=1}^N \bfSS_{ji}\basef_j, 
\end{equation}
can be taken as approximate eigenfunctions of $\Koop$, because
\begin{equation}\label{eq-eigfun-2}
	(\Koop \phi_i)(\zbx) = \lambda_i \phi_i(\zbx) + \zbr_i(\zbx),\;\;\;\zbr_i(\zbx)=\sum_{j=1}^N \rho_j(\zbx) \bfSS_{ji}.
\end{equation}
\noindent In a data--driven application, the observables are given as evaluated at the states $\x_1,\ldots, \x_M$ (the states themselves might be unknown) and the approximate eigenfunctions exist only as tabulated values for $\zbx\in\{\x_1,\ldots, \x_M\}$:
\begin{equation}\label{eq:eigs-tabulated}
	\bfPhi_x =	\begin{pmatrix}
		\phi_1(\x_1) &   \ldots & \phi_N(\x_1) \cr
		\phi_1(\x_2) &   \ldots & \phi_N(\x_2) \cr
		\vdots & \vdots  &  \vdots \cr
		\phi_1(\x_{M})  & \ldots & \phi_N(\x_{M}) \cr
	\end{pmatrix}
	= 
	\begin{pmatrix}
		\basef_1(\x_1)  &  \ldots & \basef_N(\x_1) \cr
		\basef_1(\x_2)  &  \ldots & \basef_N(\x_2) \cr
		\vdots & \vdots  &  \vdots \cr
		\basef_1(\x_{M})  &  \ldots & \basef_N(\x_{M}) \cr
	\end{pmatrix}\bfSS .
\end{equation}
Since $\basef_1,\ldots,\basef_N$ are assumed linearly independent, with $\rank(\OX)=N < M$, and $\bfSS$ is assumed nonsingular, $\phi_1,\ldots,\phi_N$ are linearly independent and the matrix of their tabulated values at $\x_1,\ldots,\x_M$ in (\ref{eq:eigs-tabulated}) is of full column rank.
\begin{remark}
	{\em
		In the non--normal case, an alternative to the classical \edmd{} is the Koopman--Schur decomposition \cite{Drmac-Mezic-Koopman-Schur-2026}, which instead of eigenfunctions computes orthonormal bases for nested sequence of invariant subspaces.
	}
\end{remark}
\section{Residuals and  backward perturbations of $\Koop$}\label{SEC:ResidsBackwardpert}
In this section $\Koop : \mathcal{D}(\Koop)\subseteq\FF \longrightarrow \FF$ is  densely defined closed operator, which includes the bounded case, e.g. unitary composition operators for measure preserving systems or quasi--compact operators.  The resolvent set and the spectrum of $\Koop$ are denoted by $\resolvent(\Koop)$ and $\bfsigma(\Koop)$ respectively. The point spectrum (eigenvalues) is denoted by $\bfsigma_p(\Koop)$.

Theoretically, the quality of an eigenpair $(\lambda_i,\phi_i)$  can be estimated from the residual $\zbr_i = \Koop\phi_i - \lambda_i\phi_i$. 
Note that the residual measures how well is $\Koop\phi_i$ approximated from the linear span of $\phi_i$. This approximation is optimal if $\zbr_i$ is orthogonal to $\phi_i$. Hence, once  the pair $(\lambda_i,\phi_i)$ is computed by an algorithm, it makes sense to redefine $\lambda_i$ to the Rayleigh quotient $\lambda_i=\langle\Koop\phi_i,\phi_i\rangle/\langle \phi_i, \phi_i\rangle$ and ensure $\langle \zbr_i,\phi_i\rangle =0$. 

Small residual norm $\|\zbr_i\|$ does not automatically mean that $\lambda_i$ is close to some eigenvalue of $\Koop$, in particular when $\Koop$ is nonnormal. Even in the case of finite nonnormal matrices, small residuals may be quite misleading, see \cite{Kahan-Parlett-Jiang-1982}.
The distance from $\lambda_i$ to the closest eigenvalue of $\Koop$ is a separate question answered by a combination of backward error analysis and perturbation theory.  Nevertheless, residual is useful because it is computable from the data at hand ($\Koop, \lambda_i, \phi_i$) and can be used to interpret the computed eigenpair as exact for an engineered \emph{backward} perturbation $\Koop-\Delta\Koop$ of the operator $\Koop$. The details of the technique of turning the residuals of a selection of approximate eigenpairs into a single backward  error $\Delta\Koop$ are given in \S \ref{SS=IndivResidAggregBackErr} in a general setting, where $\Koop$ is not necessarily the Koopman composition operator. The low--rank structure of $\Delta\Koop$ and its operator norm $\vertiii{\Delta\Koop}$ are given in \S \ref{SS=LowRankFactNormDK}. An auxiliary technical details needed for the construction of $\Delta\Koop$ is explained in \S \ref{SSS=tildephis}.

\subsection{From individual residuals to composite backward error in $\Koop$}\label{SS=IndivResidAggregBackErr}
The construction of backward error from the residual is standard technique in numerical linear algebra. In the case of single eigenpair, 
it suffices to set 
\begin{equation}\label{eq;Delta_iKoop}
	\Delta_i\Koop = \frac{1}{\langle \phi_i,\phi_i\rangle} \zbr_i\otimes \phi_i\;\;\;(\;\Delta_i\Koop f = \frac{1}{\langle \phi_i,\phi_i\rangle} \zbr_i\otimes \phi_i=\frac{\langle f,\phi_i\rangle}{\langle \phi_i,\phi_i\rangle} \zbr_i, \;f\in\FF\; ),
\end{equation}
and it is a simple matter to confirm that the computed $(\lambda_i,\phi_i)$ is an exact eigenpair of $\widetilde\Koop=\Koop-\Delta_i\Koop$:
\begin{equation}
	(\Koop - \Delta_i\Koop) \phi_i = \lambda_i\phi_i + \zbr_i - \frac{\langle \phi_i,\phi_i\rangle}{\langle \phi_i,\phi_i\rangle} \zbr_i = \lambda_i\phi_i .
\end{equation}
The norm of the perturbation $\Delta_i\Koop$ is
$$
\vertiiis{\Delta_i\Koop}=\sup_{\|f\|=1}\|\Delta_i\Koop f\|  = 
\sup_{\|f\|=1} \frac{|\langle f,\phi_i\rangle|}{\langle \phi_i,\phi_i\rangle} \|\zbr_i\| = \frac{\|\zbr_i\|}{\|\phi_i\|}.
$$
It is convenient to normalize the eigenfunctions (replace $\phi_i$ with $\phi_i/\|\phi_i\|$) and in the sequel we assume $\|\phi_i\|=1$ for all $i$, and write $\Delta_i\Koop=\zbr_i\otimes\phi_i$, $\vertiiis{\Delta_i\Koop}=\|\phi_i\|\|\zbr_i\|=\|\zbr_i\|$. Hence, the residuals can be used not only to select good approximate eigenpairs, but also to interpret the
computed quantities in the sense of backward error.

It should be  noted that the backward perturbation $\Delta_i\Koop$ in (\ref{eq;Delta_iKoop}) is constructed for each pair $(\lambda_i,\phi_i)$ separately and it yields $\lambda_i\in\bfsigma_p(\Koop-\Delta_i\Koop)$ for $i=1,\ldots, N$. This is not satisfactory because the computed eigenpairs 
(all $N$ or a subset of $\ell<N$ best ones, selected using the residuals) are used as co--players in modal analysis and these separate backward errors for each used eigenpair cannot provide interpretation in the sense of backward stability, i.e. that the data is analyzed in terms of spectral elements of some $\Koop+\mathcal{E}$ with small $\vertiii{\mathcal{E}}$.  
Similarly, if one considers the pseudospectrum\footnote{See e.g.  \cite{Trefethen-Pseudospectra-SIREW-1997},  \cite{Trefethen-Grad-NA-1999},  \cite{Trefethen-Embree-Book-2005}} of $\Koop$,
\begin{equation}
	\bfsigma_{\varepsilon}(\Koop)=\bfsigma(\Koop)\bigcup \{\zeta\not\in\bfsigma(\Koop), \vertiiis{(\zeta \Id - \Koop)^{-1}}\geq \frac{1}{\varepsilon}\}	= \overline{\bigcup_{\vertiii{\mathcal{E}}\leq\varepsilon}\bfsigma(\Koop+\mathcal{E})},
\end{equation}
then it is desirable and more natural to interpret $\lambda_1,\ldots, \lambda_N$ as the eigenvalues of single operator $\Koop+\mathcal{E}$, rather than $\lambda_i\in\bfsigma(\Koop+\mathcal{E}_i)$, $i=1,\ldots, N$.

The following theorem shows how to construct a composite backward error $\Delta\Koop$ for any selection of $\ell$ eigenpairs\footnote{To ease the notation, assume indexing such that the leading $\ell\leq N$ pairs are selected. For instance, the eigenpairs can be indexed by listing the ones with smallest residuals first.} $(\lambda_i,\phi_i)$, $i=1,\ldots, \ell$, that can be then interpreted as eigenvalues of some $\Koop-\Delta\Koop$.

\begin{theorem}\label{TM:DeltaKoop}
	Let $(\lambda_i,\phi_i)$ be approximate eigenpairs with $\|\phi_i\|=1$, $i=1,\ldots, \ell$, and let $\{\widetilde\phi_1, \ldots, \widetilde\phi_\ell\}$ and $\{\phi_1,\ldots, \phi_\ell\}$ be biorthogonal, $\langle \phi_i, \widetilde\phi_j\rangle =\bfdelta_{ij}$.\footnote{Here $\bfdelta_{ij}$ denotes the Kronecker's $\bfdelta$: $\bfdelta_{ij}=1$ for $i=j$; otherwise $\bfdelta_{ij}=0$.}	
	Let 
	\begin{equation}
		\Delta\Koop = \sum_{j=1}^\ell \zbr_j\otimes\widetilde\phi_j,\;\;\zbr_j=\Koop\phi_j-\lambda_j\phi_j.
	\end{equation}
	Then $\{\lambda_1,\ldots,\lambda_\ell\}\subset \bfsigma_p(\Koop-\Delta\Koop)$, $\vertiiis{\zbr_j\otimes\widetilde\phi_j}=\|\zbr_j\|\|\widetilde\phi_j\|$, $j=1,\ldots, \ell$.
	The essential spectra of $\Koop$ and $\Koop-\Delta\Koop$ are identical (since $\Delta\Koop$ is of finite rank).
\end{theorem}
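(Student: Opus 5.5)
The plan is to verify the three claims in turn by direct computation with the rank--one operators $\zbr_j\otimes\widetilde\phi_j$, defined as in (\ref{eq;Delta_iKoop}) by $(\zbr_j\otimes\widetilde\phi_j)f=\langle f,\widetilde\phi_j\rangle\zbr_j$. Before that, one point needs attention. Since $\Koop$ is only densely defined and closed, $\Delta\Koop$ must be a bounded operator defined on all of $\FF$. This holds because each $\zbr_j\in\FF$ (as $\phi_j\in\mathcal{D}(\Koop)$) and each $\widetilde\phi_j\in\FF$. Consequently $\Koop-\Delta\Koop$ has domain $\mathcal{D}(\Koop)$, is densely defined, and is closed, since a closed operator plus a bounded everywhere defined operator is closed. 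The eigenvalue statement therefore makes sense for the same operator class.

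\textbf{Eigenpairs.} Fix $i\le\ell$. By biorthogonality, $\Delta\Koop\,\phi_i=\sum_{j=1}^\ell\langle\phi_i,\widetilde\phi_j\rangle\zbr_j=\sum_j\bfdelta_{ij}\zbr_j=\zbr_i$. Hence
\[
(\Koop-\Delta\Koop)\phi_i=\lambda_i\phi_i+\zbr_i-\zbr_i=\lambda_i\phi_i .
\]
Since $\|\phi_i\|=1$, we have $\phi_i\neq\0$, so $\lambda_i\in\bfsigma_p(\Koop-\Delta\Koop)$. This is exactly the single--pair computation following (\ref{eq;Delta_iKoop}), with $\phi_i/\langle\phi_i,\phi_i\rangle$ replaced by the dual vector $\widetilde\phi_i$. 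That replacement is what lets the $\ell$ terms of $\Delta\Koop$ act without interfering with one another.

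\textbf{Norm of each term.} For $\|f\|=1$ we have $\|(\zbr_j\otimes\widetilde\phi_j)f\|=|\langle f,\widetilde\phi_j\rangle|\,\|\zbr_j\|\le\|\widetilde\phi_j\|\|\zbr_j\|$ by Cauchy--Schwarz. Equality is attained at $f=\widetilde\phi_j/\|\widetilde\phi_j\|$, which is admissible because $\langle\phi_j,\widetilde\phi_j\rangle=1$ forces $\widetilde\phi_j\neq\0$. This argument is identical to the computation of $\vertiiis{\Delta_i\Koop}$ above.

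\textbf{Essential spectra.} The range of $\Delta\Koop$ lies in $\mathrm{span}\{\zbr_1,\ldots,\zbr_\ell\}$, so $\Delta\Koop$ has rank at most $\ell$ and is compact. I would then invoke Weyl's theorem on the stability of the essential spectrum under compact perturbations, in its closed--operator form: if $T$ is closed and $C$ is compact (bounded suffices for relative compactness), then $T$ and $T+C$ have the same essential spectrum. The statement should specify which definition of essential spectrum is used; the Fredholm--based definitions, e.g.\ $\zeta$ such that $\zeta\Id-T$ is not Fredholm, or the Wolf/Kato variant, are the ones invariant under compact perturbations. The underlying fact is that Fredholmness and the index of $\zeta\Id-\Koop$ are preserved under adding the compact operator $-\Delta\Koop$, so the sets $\{\zeta:\zeta\Id-\Koop\text{ not Fredholm}\}$ coincide. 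The main obstacle is this step: checking that the cited perturbation theorem (e.g.\ Kato, Thm.~IV.5.26, or Edmunds--Evans) applies to unbounded closed $\Koop$ and to the chosen definition of essential spectrum, since not every definition is invariant. The remaining steps are one--line computations.
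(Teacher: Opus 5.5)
Your proposal is correct and follows the paper's proof. The paper likewise uses biorthogonality to get $\Delta\Koop\,\phi_i=\zbr_i$ and hence $(\Koop-\Delta\Koop)\phi_i=\lambda_i\phi_i$; it treats the norm identity as the same Cauchy--Schwarz computation done earlier for $\Delta_i\Koop$, and the essential-spectrum claim as immediate from finite rank. Your extra care fills in details the paper leaves implicit: that $\Koop-\Delta\Koop$ stays closed on $\mathcal{D}(\Koop)$, that $\widetilde\phi_j\neq\0$, and that invariance holds for the Fredholm-type essential spectra via relative compactness (but not for every definition, e.g.\ Browder's).
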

{\sc Proof} It is easily checked that
	\begin{eqnarray*}
		(\Koop - \sum_{j=1}^\ell \zbr_j\otimes\widetilde\phi_j)\phi_i &=& \lambda_i\phi_i + \zbr_i - 
		\sum_{j=1}^\ell (\zbr_j\otimes\widetilde\phi_j) \phi_i \\ &=& 
		\lambda_i\phi_i + \zbr_i - \sum_{j=1}^\ell \langle \phi_i, \widetilde\phi_j\rangle \zbr_j = 
		\lambda_i\phi_i + \zbr_i - \zbr_i = \lambda_i\phi_i .
	\end{eqnarray*}
	(Note that here $\Koop$ is not necessarily the composition operator.)
\hfill$\boxtimes$

\begin{remark}
	{\em
		If the $\phi_i$'s are not normalized, then $\Delta\Koop$ is written  as 
		$
		\Delta\Koop = \sum_{j=1}^\ell \frac{1}{\langle \phi_j,\phi_j\rangle}\zbr_j\otimes\widetilde\phi_j,
		$
		and $\widetilde\phi_1, \ldots, \widetilde\phi_\ell$ are defined by the biorthogonality relations $\langle \phi_i,\widetilde\phi_j\rangle=\bfdelta_{ij}\langle\phi_i,\phi_i\rangle$, $1\leq i,j\leq \ell$. If $\phi_1,\ldots, \phi_\ell$ are orthonormal, then $\widetilde\phi_i=\phi_i$.
	}
\end{remark}

\noindent  Theorem \ref{TM:DeltaKoop} allows for accepting the computed eigenpairs as exact, provided that small perturbation of the operator is acceptable. If all residuals are sufficiently small, then $\vertiii{\Delta\Koop}\leq \sum_{j=1}^\ell \|\zbr_j\|\|\widetilde\phi_j\|$ is small. That  the computed eigenpairs are exact for $\Koop-\Delta\Koop$ with small $\vertiii{\Delta\Koop}$ and $\Delta\Koop$ of rank at most $\ell$ is useful information.  Low (finite) rank perturbations can create or destroy eigenvalues, but leave the essential spectrum intact. Depending on $\Koop$, perturbation theory can be applied to estimate the distances between the $\lambda_i$'s and the spectrum of $\Koop$. This is further pursued in \cite{Drmac-Djimoti-BackShadow-2026}.

		%

\subsection{Low rank factorization and the norm of $\Delta\Koop$}\label{SS=LowRankFactNormDK}
The formula for $\Delta\Koop$ can be written in the familiar rank--revealing product form as follows. 
Define the symbol $\zbR$ as the $\infty \times \ell$ matrix\footnote{See \cite{Townsend-Trefethen-Cont-Matr-2015}.} whose columns are the
functions $\zbr_1,\ldots, \zbr_{\ell}$, and interpret it as the linear operator
\begin{equation}\label{eq:Resid-operator}
	\zbR=\begin{pmatrix} \zbr_1 & \ldots & \zbr_\ell\end{pmatrix} : \C^\ell\longrightarrow \FF, \;\; \zbR \zbz=\sum_{i=1}^\ell z_i \zbr_i,\;\;\zbz=\begin{pmatrix} z_1 & \ldots & z_\ell\end{pmatrix}^T.
\end{equation}
In the same way, define $\Phi\zbz=\sum_{i=1}^\ell z_i\phi_i$ and 
\begin{equation}\label{eq:biorthPhi}
	\widetilde\Phi = \begin{pmatrix} \widetilde\phi_1 & \ldots & \widetilde\phi_\ell\end{pmatrix}: \C^\ell\longrightarrow \FF, \;\; \widetilde\Phi \zbz=\sum_{i=1}^\ell z_i \widetilde\phi_i,\;\;\zbz=\begin{pmatrix} z_1 & \ldots & z_\ell\end{pmatrix}^T .
\end{equation}
Then, in the pair of the standard inner product $(\cdot,\cdot)$ in $\C^\ell$ and $\langle\cdot,\cdot\rangle$ in $\FF$, for any $\zbz=\begin{pmatrix} z_1 & \ldots , z_\ell\end{pmatrix}^T\in\C^\ell$ and $f\in\FF$
$$
\langle \widetilde\Phi \zbz, f\rangle = \langle \sum_{i=1}^\ell z_i \widetilde\phi_i, f\rangle = \sum_{i=1}^\ell z_i \langle \widetilde\phi_i, f\rangle = (\zbz,\begin{pmatrix} \overline{\langle \widetilde\phi_1, f\rangle} & \ldots & \overline{\langle \widetilde\phi_\ell, f\rangle}\end{pmatrix}^T) ,
$$
and the adjoint $\widetilde\Phi^*$ of $\widetilde\Phi$ that satisfies $\langle \widetilde\Phi \zbz, f\rangle = (\zbz,\widetilde\Phi^* f)$ is 
$$
\widetilde\Phi^* f = \begin{pmatrix} \langle f,\widetilde\phi_1\rangle \cr \vdots \cr \langle f,\widetilde\phi_\ell\rangle\end{pmatrix}\in\C^\ell,\;\;f\in\FF .
$$
The formulas for the adjoints $\zbR^*$, $\Phi^*$ are analogous. 
\begin{proposition}\label{PROP:DKAdjoint}
	With $\zbR$ and $\widetilde\Phi$ defined in (\ref{eq:Resid-operator}), (\ref{eq:biorthPhi}), the backward perturbation $\Delta\Koop$ can be written analogously to the matrix low--rank factorization as $\Delta\Koop=\zbR\widetilde\Phi^*$. The adjoint $\Delta\Koop^{\star}$ of $\Delta\Koop$ in $(\FF,\langle\cdot,\cdot\rangle)$ is 
	$\Delta\Koop^\star = \widetilde\Phi\zbR^*$.
\end{proposition}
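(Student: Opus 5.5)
The plan is to verify both identities by letting each side act on an arbitrary observable and comparing the results.

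For the factorization, I would start from the definition of the rank-one operator in (\ref{eq;Delta_iKoop}): $(\zbr_j\otimes\widetilde\phi_j)f=\langle f,\widetilde\phi_j\rangle\zbr_j$. Summing over $j$ gives
$$\Delta\Koop f=\sum_{j=1}^\ell\langle f,\widetilde\phi_j\rangle\zbr_j .$$
On the other side, the formula for $\widetilde\Phi^*$ derived just before the statement gives
$$\widetilde\Phi^*f=(\langle f,\widetilde\phi_1\rangle,\ldots,\langle f,\widetilde\phi_\ell\rangle)^T\in\C^\ell .$$
Applying $\zbR$ as defined in (\ref{eq:Resid-operator}), with $z_j=\langle f,\widetilde\phi_j\rangle$, yields $\zbR\widetilde\Phi^*f=\sum_j\langle f,\widetilde\phi_j\rangle\zbr_j$. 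The two expressions agree for every $f\in\FF$, so $\Delta\Koop=\zbR\widetilde\Phi^*$. Both sides are bounded and defined on all of $\FF$, since they are finite sums of bounded rank-one maps, so no domain issues arise even when $\Koop$ is unbounded.

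For the adjoint, I would take the Hilbert-space adjoint of the product, $(AB)^\star=B^\star A^\star$. Here $A=\zbR:\C^\ell\to\FF$ and $B=\widetilde\Phi^*:\FF\to\C^\ell$ are both bounded, and $(\widetilde\Phi^*)^*=\widetilde\Phi$, which gives $\Delta\Koop^\star=\widetilde\Phi\zbR^*$.

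To avoid relying on the general rule, I would also check this directly. For $f,g\in\FF$, using linearity in the first argument and conjugate linearity in the second,
$$\langle\Delta\Koop f,g\rangle=\sum_j\langle f,\widetilde\phi_j\rangle\langle\zbr_j,g\rangle=\Big\langle f,\sum_j\langle g,\zbr_j\rangle\widetilde\phi_j\Big\rangle .$$
By the analogous formula $\zbR^*g=(\langle g,\zbr_1\rangle,\ldots,\langle g,\zbr_\ell\rangle)^T$, the right-hand function is exactly $\widetilde\Phi\zbR^*g$.

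The only place to be careful is the conjugation convention in this last step. Moving the scalar $\langle\zbr_j,g\rangle$ into the second argument of the inner product requires conjugating it, and $\overline{\langle\zbr_j,g\rangle}=\langle g,\zbr_j\rangle$. This matches the coefficient that $\zbR^*$ produces. Apart from that the argument is routine.
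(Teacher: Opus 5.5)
Your proof is correct and follows essentially the same route as the paper. The paper also checks $\Delta\Koop f=\zbR(\widetilde\Phi^*f)=\sum_j\langle f,\widetilde\phi_j\rangle\zbr_j$ on an arbitrary $f$, and then obtains the adjoint from $\langle \zbR\widetilde\Phi^* f,g\rangle=(\widetilde\Phi^*f,\zbR^*g)=\langle f,\widetilde\Phi\zbR^*g\rangle$, which is what your product rule and direct coordinate check amount to, arriving at the same formula $\Delta\Koop^\star g=\sum_j\langle g,\zbr_j\rangle\widetilde\phi_j$.
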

{\sc Proof} It is straightforward to compute that, for any $f, g\in\FF$, 
	\begin{equation}
		(\Delta\Koop f) (\zbx) = \sum_{j=1}^\ell ((\zbr_j\otimes\widetilde\phi_j) f)(\zbx) = (\zbR(\widetilde\Phi^* f))(\zbx) = \sum_{j=1}^\ell \langle f, \widetilde\phi_j\rangle \zbr_j(\zbx),
	\end{equation}
	and
	$
	\langle \Delta\Koop f,g\rangle = \langle \zbR\widetilde\Phi^* f,g\rangle = (\widetilde\Phi^* f,\zbR^* g)=\langle f,\widetilde\Phi\zbR^*g\rangle = \langle f,(\zbR\widetilde\Phi^*)^\star g\rangle
	$. Hence 
	$\Delta\Koop^\star g = \sum_{j=1}^\ell \langle g,\zbr_j\rangle \widetilde\phi_j$.
\hfill$\boxtimes$

\noindent The next task is to compute the norm of $\Delta\Koop$. 

\begin{proposition}\label{PROP:DeltaKnorm}
	Let $\Delta\Koop$ be defined as in Theorem \ref{TM:DeltaKoop}. Then 
	\begin{equation}\label{eq:|||DK|||}
		\vertiii{\Delta\Koop} = \sqrt{\|\zbL^* \zbM_{\zbR}\zbL\|_2} = \sqrt{\lambda_{\max}(\widetilde\GramM \zbM_{\zbR})}= \sigma_{\max}(\sqrt{\zbM_{\zbR}}\zbL),
	\end{equation}
	where $\zbM_{\zbR}$ is the Gram matrix of the residuals $\zbr_1,\ldots,\zbr_{\ell}$, $\sqrt{\zbM_{\zbR}}$, its positive semidefinite square root; $\zbL$ is the lower triangular Cholesky factor of the Gram matrix $\widetilde\GramM$ of $\widetilde\phi_1,\ldots, \widetilde\phi_\ell$, $\|\cdot\|_2$ is the matrix spectral norm, and $\sigma_{\max}(\cdot)$ denotes the maximal singular value. (If $\widetilde\phi_1,\ldots, \widetilde\phi_\ell$ are orthonormal, then $\zbL=\Id_\ell$.)
\end{proposition}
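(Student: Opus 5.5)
We use the factorization $\Delta\Koop=\zbR\widetilde\Phi^*$ from Proposition \ref{PROP:DKAdjoint} and the identity $\vertiii{\Delta\Koop}^2=\vertiii{\Delta\Koop^\star\Delta\Koop}$, which holds for bounded operators on a Hilbert space. Here $\Delta\Koop$ is of finite rank and therefore bounded. With the Gram matrices $\zbM_{\zbR}=\zbR^*\zbR$, i.e. $(\zbM_{\zbR})_{ij}=\langle \zbr_j,\zbr_i\rangle$, and $\widetilde\GramM=\widetilde\Phi^*\widetilde\Phi$, we get
\[
\Delta\Koop^\star\Delta\Koop=\widetilde\Phi\,\zbR^*\zbR\,\widetilde\Phi^*=\widetilde\Phi\,\zbM_{\zbR}\,\widetilde\Phi^* .
\]
This is a self-adjoint, positive semidefinite operator whose range lies in $\mathrm{span}\{\widetilde\phi_1,\ldots,\widetilde\phi_\ell\}$. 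Its norm equals its largest eigenvalue. The first step is therefore to reduce this eigenvalue problem to an $\ell\times\ell$ matrix problem.

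For the reduction, note that the nonzero eigenvalues of $AB$ and $BA$ coincide for $A:\C^\ell\to\FF$ and $B:\FF\to\C^\ell$. Indeed, if $ABf=\mu f$ with $\mu\neq0$, then $Bf\neq 0$ and $BA(Bf)=\mu Bf$, and the converse is analogous. Taking $A=\widetilde\Phi$ and $B=\zbM_{\zbR}\widetilde\Phi^*$ gives that the nonzero spectrum of $\Delta\Koop^\star\Delta\Koop$ equals that of $\zbM_{\zbR}\widetilde\Phi^*\widetilde\Phi=\zbM_{\zbR}\widetilde\GramM$, which is similar to $\widetilde\GramM\zbM_{\zbR}$. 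Consequently $\vertiii{\Delta\Koop}^2=\lambda_{\max}(\widetilde\GramM\zbM_{\zbR})$. This eigenvalue is real and nonnegative, because the product of two positive semidefinite matrices has such a spectrum. If $\Delta\Koop=0$, all three expressions vanish.

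Next we bring in the Cholesky factorization. Biorthogonality $\langle\phi_i,\widetilde\phi_j\rangle=\bfdelta_{ij}$ implies that the $\widetilde\phi_j$ are linearly independent. Hence $\widetilde\GramM$ is positive definite and has a Cholesky factorization $\widetilde\GramM=\zbL\zbL^*$ with $\zbL$ nonsingular and lower triangular. (With the convention $\widetilde\GramM_{ij}=\langle\widetilde\phi_j,\widetilde\phi_i\rangle$, one may need to take the conjugate or transpose, but this does not affect the norms.) Then $\widetilde\GramM\zbM_{\zbR}=\zbL(\zbL^*\zbM_{\zbR}\zbL)\zbL^{-1}$ is similar to the Hermitian positive semidefinite matrix $\zbL^*\zbM_{\zbR}\zbL$. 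Its largest eigenvalue is therefore $\|\zbL^*\zbM_{\zbR}\zbL\|_2$.

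Finally, writing $\zbL^*\zbM_{\zbR}\zbL=(\sqrt{\zbM_{\zbR}}\zbL)^*(\sqrt{\zbM_{\zbR}}\zbL)$ gives $\|\zbL^*\zbM_{\zbR}\zbL\|_2=\sigma_{\max}(\sqrt{\zbM_{\zbR}}\zbL)^2$, and taking square roots yields (\ref{eq:|||DK|||}). In the orthonormal case $\widetilde\GramM=\Id_\ell$, so $\zbL=\Id_\ell$.

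The main point needing care is the transfer of the spectral problem from the infinite dimensional operator to the $\ell\times\ell$ matrix. We handle it by observing that $\Delta\Koop^\star\Delta\Koop$ is a bounded, self-adjoint, finite-rank operator. Its norm is then a maximal eigenvalue, and its nonzero eigenvalues are obtained exactly through the $AB$/$BA$ argument. An alternative route is to write $\widetilde\Phi=U\zbL^*$ with $U$ an isometry onto $\mathrm{span}\{\widetilde\phi_j\}$, via a Gram–Schmidt (QR-type) factorization. Then $\Delta\Koop^\star\Delta\Koop=U(\zbL^*\zbM_{\zbR}\zbL)U^*$, and the formula can be read off directly. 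Apart from this, what remains is bookkeeping of the conjugation conventions in the Gram matrices.
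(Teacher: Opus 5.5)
Your proof is correct and follows essentially the same route as the paper's. Both use $\vertiii{\Delta\Koop}^2=\vertiii{\Delta\Koop^\star\Delta\Koop}$ with $\Delta\Koop^\star\Delta\Koop=\widetilde\Phi\zbR^*\zbR\widetilde\Phi^*$, then the $AB$/$BA$ transfer of the nonzero eigenvalues to an $\ell\times\ell$ product of Gram matrices, and then the Cholesky similarity to $\zbL^*\zbM_{\zbR}\zbL$ (the paper splits the product as $(\widetilde\Phi\zbR^*\zbR)\widetilde\Phi^*$ and lands on $\widetilde\GramM\zbM_{\zbR}$; you land on the similar matrix $\zbM_{\zbR}\widetilde\GramM$). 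The one thing to tighten is your parenthetical remark that the convention ``does not affect the norms'': the identity needs both Gram matrices in the same convention, as your definitions $\zbM_{\zbR}=\zbR^*\zbR$ and $\widetilde\GramM=\widetilde\Phi^*\widetilde\Phi$ ensure, since conjugating only one of them can change $\lambda_{\max}$ for complex-valued functions.
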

{\sc Proof} Since $\Delta\Koop^\star\Delta\Koop$ is Hermitian positive semidefinite with finite dimensional range,
	$$
	\vertiii{\Delta\Koop}^2=\vertiii{\Delta\Koop^\star\Delta\Koop}=\sup_{\|f\|=1}\langle \Delta\Koop^\star\Delta\Koop f,f\rangle = \kappa_{\max} ,
	$$
	where $\kappa_{\max}$ is the largest nonnegative eigenvalue of $\Delta\Koop^\star\Delta\Koop=\widetilde\Phi\zbR^*\zbR\widetilde\Phi^*$. The key observation is that $\Delta\Koop^\star\Delta\Koop$ and $\widetilde\Phi^*\widetilde\Phi\zbR^*\zbR :\C^\ell\longrightarrow\C^\ell$ have the same set of positive eigenvalues. This follows from a general result and it can be shown as follows. Let $\kappa>0$ be an eigenvalue of $\Delta\Koop^\star\Delta\Koop$: 
	$\widetilde\Phi\zbR^*\zbR\widetilde\Phi^* \psi = \kappa\psi$, $\psi\neq\0$. Then necessarily $\widetilde\Phi^* \psi\neq \0$ and 
	$\widetilde\Phi^*\widetilde\Phi\zbR^*\zbR\Phi^* \psi = \kappa \Phi^* \psi$, i.e. $\kappa$ is an eigenvalue of $\widetilde\Phi^*\widetilde\Phi\zbR^*\zbR$. A similar argument gives the other inclusion. The eigenvalues of $\widetilde\Phi^*\widetilde\Phi\zbR^*\zbR$ can be computed from its matrix representation in the canonical basis $\ei_1,\ldots, \ei_\ell$ of $\C^\ell$, which is the product $\zbM_{\widetilde\Phi}\zbM_{\zbR}$ of the matrix representations $\zbM_{\zbR}$ of $\zbR^*\zbR$ and $\zbM_{\widetilde\Phi}$ of  $\widetilde\Phi^*\widetilde\Phi$, that are computed as
	$$
	(\zbM_{\zbR})_{ij} = (\zbR^*\zbR \ei_j,\ei_i) = \langle \zbR\ei_j,\zbR\ei_i\rangle = \langle \zbr_j,\zbr_i\rangle,\;
	(\zbM_{\widetilde\Phi})_{ij} = \langle \widetilde\phi_i,\widetilde\phi_j\rangle,\;\;1\leq i, j \leq \ell .
	$$
	Since $\widetilde\phi_1,\ldots,\widetilde\phi_\ell$ are linearly independent, $\zbM_{\widetilde\Phi}$ is positive definite  and it has Cholesky factorization $\zbM_{\widetilde\Phi}=\zbL\zbL^*$. Since 
	$\zbM_{\widetilde\Phi}\zbM_{\zbR}=\zbL\zbL^*\zbM$ is similar to $\zbL^*\zbM\zbL$, it follows that 
	$\vertiii{\Delta\Koop^\star\Delta\Koop}=\|\zbL^* \zbM_{\zbR}\zbL\|_2$. A practical formula to approximate (\ref{eq:|||DK|||}) from the data is given in \S \ref{S=DataDrivenResids}.
\hfill$\boxtimes$

\subsection{Construction of $\{\widetilde\phi_1, \ldots, \widetilde\phi_\ell\}$}\label{SSS=tildephis}
The existence of the system $\{\widetilde\phi_1, \ldots, \widetilde\phi_\ell\}$ is assured by the theory, in particular by the Hahn--Banach/Riesz representation theorem. However, it is instructive (and it could be useful for practical purposes) to see the details of explicit constructions of such a system in terms of the dictionary $\Basef=\begin{pmatrix} \basef_1, \basef_2, \ldots, \basef_N\end{pmatrix}$ that defines $\FF_N$ and in terms of the approximate eigenfunctions.

Let $\phi_i(x) = \Basef(x) \bfss_i = \sum_{j=1}^N\basef_j(x) (\bfss_i)_j$. 
The functions $\widetilde\phi_i$ from the biorthogonal set are constructed with the Ansatz $\widetilde\phi_i(x)=\Basef(x) \widetilde\bfss_i = \sum_{j=1}^N\basef_j(x) (\widetilde\bfss_i)_j$, and the task is to determine the vectors $\widetilde\bfss_1, \ldots , \widetilde\bfss_{\ell}$. The biorthogonality conditions read
\begin{eqnarray*}
	\langle \phi_i, \widetilde \phi_j\rangle &=& \langle \sum_{k=1}^N\basef_k (\bfss_i)_k, \sum_{s=1}^N\basef_s (\widetilde\bfss_j)_s\rangle = \sum_{k=1}^N \sum_{s=1}^N (\bfss_i)_k \overline{(\widetilde\bfss_j)_s} \langle \basef_k,\basef_s\rangle \\
	&=& \widetilde\bfss_j^* \GramM \bfss_i = \bfdelta_{ij},\;\;\mbox{for all $i,j\in\{1,\ldots,\ell\}$, where}\;\;\GramM_{sk}= \langle \basef_k,\basef_s\rangle.
\end{eqnarray*}
Let $\bfSS_\ell=\begin{pmatrix} \bfss_1 & \ldots &\bfss_\ell\end{pmatrix}$, $\widetilde\bfSS_\ell=\begin{pmatrix} \widetilde\bfss_1 & \ldots & \widetilde\bfss_\ell\end{pmatrix}$. The biorthogonality condition, recorded in the basis $\Basef$ reads $\widetilde\bfSS_\ell^*\GramM\bfSS_\ell=\bfSS_\ell^*\GramM\widetilde\bfSS_\ell=\Id_\ell$. If $\ell=N$, then $\widetilde\bfSS_N=\GramM^{-1}\bfSS_N^{-*}$ is the unique choice. In the case $\ell<N$, $\widetilde\bfSS_\ell=\GramM^{-1}\bfSS_\ell^{*\dagger}$ is the matrix in the solution manifold  that is of minimal Frobenius norm. 

In the case $\ell<N$, more economical construction and favorable for some technical reasons is from the linear span of $\phi_1,\ldots, \phi_\ell$, i.e. $\widetilde\phi_i=\sum_{k=1}^\ell \phi_k(\widehat\bfss_i)_k$. Repeating the above construction yields the $\ell\times\ell$ matrix $\widehat\bfSS_\ell=\begin{pmatrix} \widehat\bfss_1 & \ldots & \widehat\bfss_\ell\end{pmatrix}=\GramM_\phi^{-1}$, where 
$\GramM_\phi$ is the Gram matrix of $\phi_1,\ldots, \phi_\ell$. Note that 
$\|\widetilde\bfSS_\ell\|_F\leq \|\widehat\bfSS_\ell\|_F$.

With a selection of $\widetilde\bfSS_\ell$ ($\widehat\bfSS_\ell$), the values of the functions $\widetilde\phi_i$ at the data $\x_1, \ldots, \x_M$ can be tabulated (analogously to (\ref{eq:eigs-tabulated})) in the matrix $\widetilde \Phi_x = \OX\widetilde\bfSS_\ell$ ($\widetilde \Phi_x = \OX\widehat\bfSS_\ell$).

\section{From backward error in $\Koop$ to shadowing}\label{SS=BackErrShadowOpenPr}
In the context of linearization and data driven analysis of nonlinear dynamics (\ref{eq:DDS}), the Koopman operator $\Koop$ is not the input -- it is a tool. Theorem \ref{TM:DeltaKoop} addresses the general situation of spectral approximation of a linear operator -- when $\Koop$ is specified as the composition operator, it does not assert that $\Koop-\Delta\Koop$ is a composition operator. Hence, this trade of inexactness of the computed eigenpairs $(\lambda_i,\phi_i)$, $i=1,\ldots,\ell$, for the backward error $\Delta\Koop$ in $\Koop$ does not provide satisfactory backward stability statement in terms of the original problem -- study of observables along trajectories of (\ref{eq:DDS}). The result we aim to establish is illustrated in the diagram in Figure \ref{FIG:Goal-BSS}. 
\begin{figure}[H]
	\[
	\begin{tikzcd}[
		arrow style=tikz,
		>=Latex,
		line width=1.1pt,
		row sep=3em,
		column sep=5.1em
		]
		\begin{array}{l}\x_{k+1}=\DDS(\x_k) \cr
			\x_1, \mbox{ observable } f\end{array}
		\arrow[r,draw=red,"\textcolor{red}{\mathrm{\edmd{}, \kmd{}}}"]
		\arrow[d,draw=blue,swap,"{\textcolor{blue}{\widehat\x_1=\x_1+\Delta \x_1}}"]
		&
		\textcolor{red}{\begin{array}{l} (\lambda_i,\phi_i), i=1,\ldots, \ell ; \; \Koop-\Delta\Koop ;\cr
				\mbox{analysis and forecast of $f$}\end{array}}
		\\
		\textcolor{blue}{\widehat\x_{k+1}=\DDS(\widehat\x_k)}
		\arrow[r, draw=blue, swap,"\textcolor{blue}{\mbox{exact } {\displaystyle \Koop f=f\circ \DDS}}"]
		&
		\textcolor{blue}{\begin{array}{l}\mbox{analysis and} \cr \mbox{forecast of } f(\widehat\x_k)\end{array}}
		\arrow[u,draw=magenta, leftrightarrow,swap,"\textcolor{magenta}{\mbox{small corrections}}",dashed]
	\end{tikzcd}
	\]
	\caption{\label{FIG:Goal-BSS} Commutative diagram of backward shadowing stability of \edmd{}/\kmd{}: The computed \edmd{}/\kmd{} analysis of an observable $f$ corresponds, up to small correction, to the exact analysis of $f$ along a trajectory with slightly changed initial condition.}
\end{figure}
\noindent To that end, in this section we provide necessary auxiliary technical results. \S \ref{SS=ConceptualFramework} presents the key idea how to trade $\Delta\Koop$ for a perturbation of the trajectory, and 
\S \ref{SS=NumerShadowing} revisits the shadowing theory, whose combination with the backward error analysis of the \edmd{} is the key
for the ultimate goal -- the commutative diagram in Figure \ref{FIG:Goal-BSS}.
%

\subsection{Introductory ideas}\label{SS=ConceptualFramework}
Suppose that the observable of interest, $f$, is subject to the \kmd{}, i.e. 
let $f(\x_k)\approx \sum_j \alpha_j \phi_j(\x_1)\lambda_j^{k-1}$. This representation is used for forecasting so that $f(\x_{k+1})$ is predicted as $\sum_j \alpha_j \phi_j(\x_1)\lambda_j^{k}$, which is the result of 
\begin{eqnarray}
	f(\x_{k+1})&=&(\Koop^k f)(\x_1)=(\Koop f)(\x_k) \approx \sum_j \alpha_j \lambda_j^{k-1}(\Koop\phi_j)(\x_1) =  \sum_j \alpha_j \lambda_j^{k-1}(\lambda_j\phi_j(\x_1)+\zbr_j(\x_1)) \nonumber \\ 
	&\approx& \sum_j \alpha_j \phi_j(\x_1)\lambda_j^{k} = ((\Koop-\Delta\Koop)\sum_j \alpha_j \phi_j\lambda_j^{k-1})(\x_1) = ((\Koop-\Delta\Koop)^k\sum_j \alpha_j \phi_j\lambda_j)(\x_1).
	\label{eq:K-DK-powers}
\end{eqnarray}
Essentially, the \kmd{} and its applications (such as forecasting) are in terms of the spectral data of $\Koop-\Delta\Koop$.

Backward stability is a matter of interpretation, and it is most useful if it can be expressed in terms of the original problem, which is evaluation and analysis of observables along trajectories of the dynamical system defined by the mapping $\DDS$. 
Hence, it is important to pursue along the backward error analysis principles and to try to push $\Delta\Koop$ one more step backward -- into the map $\DDS$ of the given dynamical system and even further into the initial condition.

To explore this possibility, let $f:\mathcal{X}\subseteq\R^n\longrightarrow \R$ be an observable of interest, assumed smooth.\footnote{Similar analysis, \emph{mutatis mutandis}, applies if $f$ is defined on a smooth manifold.} Then
$$
((\Koop-\Delta\Koop)f)(\zbx) = (f\circ\DDS)(\zbx) + e(\zbx),\;\;e(\zbx) = - (\Delta\Koop f)(\zbx)= - \sum_{j=1}^\ell\langle f,\widetilde\phi_j\rangle \zbr_j(\zbx),
$$ 
and the goal is to interpret $e(\zbx)$ as a result of perturbing the system mapping $\DDS$.
To that end, let $\bftau_f(\zbx)$ be small and such that 
$$
f(\DDS(\zbx)+\bftau_f(\zbx)) \approx f(\DDS(\zbx)) + \nabla f(\DDS(\zbx))^T\bftau_f(\zbx) = f(\DDS(\zbx)) + e(\zbx) .
$$
Then, assuming $\DDS(\zbx)$ is not critical point of $f$, $\Koop-\Delta\Koop$ acts on $f$ in a composition fashion as 
\begin{equation}\label{eq:tauf(x)}
	((\Koop-\Delta\Koop)f)(\zbx) \approx f(\DDS(\zbx)+\bftau_f(\zbx))\;\mbox{with}\;
	\bftau_f(\zbx) = \frac{e(\zbx)}{\|\nabla f(\DDS(\zbx))\|_2^2}\nabla f(\DDS(\zbx)).
\end{equation}
This can be made more precise by including  the second order term in Taylor expansion, or by invoking the implicit function theorem.  Taking one step further, 
$$
\DDS(\zbx+\delta\zbx) = \DDS(\zbx) + [D{\DDS}(\zbx)]\delta\zbx + O(\|\delta\zbx\|),
$$
where (assuming nonsingular Jacobian  $[D{\DDS}(\zbx)]$) gives $\DDS(\zbx + [D{\DDS}(\zbx)]^{-1}\bftau_f(\zbx))\approx \DDS(\zbx)+\bftau_f(\zbx)$. 
Altogether, 
\begin{eqnarray}
	((\Koop-\Delta\Koop)f)(\zbx) &\approx& f(\DDS(\zbx)+\bftau_f(\zbx))\approx f(\DDS(\zbx + [D{\DDS}(\zbx)]^{-1}\bftau_f(\zbx))) \nonumber \\
	&=& f(\DDS(\zbx + \delta\zbx))=(\Koop f)(\zbx + \delta\zbx),\;\;\delta\zbx =  [D{\DDS}(\zbx)]^{-1}\bftau_f(\zbx) .\label{eq:tauf(x)-2}
\end{eqnarray}
The above relations are derived rather informally, but, more importantly, (\ref{eq:tauf(x)}), (\ref{eq:tauf(x)-2}) indicate that it may be possible to turn the backward error $\Delta\Koop$ into perturbation of the data, while keeping the original composition operator attached to the mapping $\DDS$. Clearly, using only (\ref{eq:tauf(x)}), (\ref{eq:tauf(x)-2}) to track the errors along a long trajectory of (\ref{eq:DDS}) and pushing them backward into the data snapshots is practically not feasible, and more powerful techniques are needed.
In fact, the lines above are at the core of the proof of the shadowing lemma \cite{Anosov1970}, \cite{Bowen1975}, \cite{Palmer-Shadowing-2000}. Here, the setting is more complicated and it requires a composition of the shadowing theory with the evaluation of observables along trajectories expressed by the Koopman operator and its perturbation.

It follows immediately, that the question of backward stability of the \edmd{} belongs to the framework of the shadowing theory in dynamical systems \cite{Palmer-Shadowing-2000}, \cite{book-Pilyugin-shadowing},   \cite{Pilyugin2011}, \cite[Chapter 18]{KatokHasselblatt1995}, \cite{NusseYorke1988}. The idea of shadowing is illustrated in Figure \ref{FIG:Shadow1}.

\begin{figure}[H]
	\begin{center}
		\begin{tikzpicture}[>=Stealth,scale=0.89,transform shape]
			
			\coordinate (x0) at (0.0,0.10);
			\coordinate (x1) at (1.0,1.00);
			\coordinate (x2) at (2.0,0.35);
			\coordinate (x3) at (3.0,1.55);
			\coordinate (x4) at (4.0,0.70);
			\coordinate (x5) at (5.0,1.75);
			\coordinate (x6) at (6.0,1.05);
			
			\draw[blue,densely dashed]
			(x0)--(x1)--(x2)--(x3)--(x4)--(x5)--(x6);
			
			\foreach \p in {x0,x1,x2,x3,x4,x5,x6}
			\fill[blue] (\p) circle (2.2pt);
			
			\node[blue] at (-0.09,-0.09) {$\x_1$} ;
			\coordinate (z0) at (0.12,-0.08);
			\coordinate (z1) at (1.10,0.82);
			\coordinate (z2) at (2.12,0.52);
			\coordinate (z3) at (3.10,1.34);
			\coordinate (z4) at (4.10,0.90);
			\coordinate (z5) at (5.10,1.55);
			\coordinate (z6) at (6.10,1.25);
			
			\draw[green!60!black,densely dashed]
			(z0)--(z1)--(z2)--(z3)--(z4)--(z5)--(z6);
			
			\foreach \p in {z0,z1,z2,z3,z4,z5,z6}
			\filldraw[green!60!black,rotate=45]
			(\p) rectangle +(0.09,0.09);
			
			\node[green!60!black] at (0.49,-0.10) {$\widehat\x_1$} ;
			\coordinate (y0) at (-0.10,0.30);
			\coordinate (y1) at (0.88,1.20);
			\coordinate (y2) at (1.90,0.15);
			\coordinate (y3) at (2.88,1.75);
			\coordinate (y4) at (3.90,0.48);
			\coordinate (y5) at (4.88,1.95);
			\coordinate (y6) at (5.88,0.85);
			
			\draw[red,densely dashed]
			(y0)--(y1)--(y2)--(y3)--(y4)--(y5)--(y6);
			
			\foreach \p in {y0,y1,y2,y3,y4,y5,y6}
			\filldraw[red] (\p) rectangle +(0.09,0.09);
			
			\node[red] at (-0.13,0.59) {$\widetilde\x_1$} ;
			\foreach \a/\b in {
				y0/z0,
				y1/z1,
				y2/z2,
				y3/z3,
				y4/z4,
				y5/z5,
				y6/z6}
			{
				\draw[gray!70,->] (\a)--(\b);
			}
			
			\node[blue] at (8.8,1.05) {exact trajectory, $\x_{k+1}=\DDS(\x_k)$};
			\node[green!60!black] at (9.99,1.6) {shadowing trajectory, $\widehat\x_{k+1}=\DDS(\widehat\x_k)$, $\|\widehat\x_k-\widetilde\x_k\|_{\R^n}\leq\varepsilon$};
			\node[red] at (9.77,0.65) {pseudo--trajectory, $\|\widetilde\x_{k+1}-\DDS(\widetilde\x_k)\|_{\R^n} \leq \delta=\max_k\delta_k$};
			
		\end{tikzpicture}
	\end{center}
	\caption{\label{FIG:Shadow1} $(\widetilde\x_k)_{k=1}^{M+1}$ is a $\delta$--pseudo--trajectory of (\ref{eq:DDS}) along which the observable $f$ evolves as (\ref{eq:K-DKkfx1}). It is $\varepsilon$--shadowed by a true orbit $\widehat\x_{k+1}=\DDS(\widehat\x_k)$ if $\max_k \|\widehat\x_k-\widetilde\x_k\|_{\R^n}\leq\varepsilon$. In the ideal case, the forecast (\ref{eq:K-DK-powers}) based on the inexact eigenpairs of $\Koop$ is exact for the values of $f$ along a nearby 
		trajectory $(\widehat\x_k)_{k=1}^{M+1}$.}
\end{figure}
\subsection{Numerical shadowing}\label{SS=NumerShadowing}
The backward error analysis of the \edmd{} from \S \ref{SS=IndivResidAggregBackErr} and the informal discussion in \S \ref{SS=ConceptualFramework} indicate that the residuals in the \edmd{} can be transformed through the backward perturbation $\Delta\Koop$ into the trajectory data, and that the shadowing theory provides rigorous mathematical apparatus. 
\S \ref{SSS=AuxLemma}, \S \ref{SSS=ShadowCOntactBackStable} revisit the shadowing theory \cite[Chapter 11]{Palmer-Shadowing-2000} and provide an extension of the numerical shadowing lemma.
\subsubsection{Auxiliary lemma -- review and an extension}\label{SSS=AuxLemma}
This section provides the key technical result for the numerical shadowing -- Lemma \ref{Lemma:Palmer++}, which is an extension of \cite[Lemma 11.4]{Palmer-Shadowing-2000}. Detailed proof is provided for the sake of clarity, and in particular to show how the assertion can be strengthened using techniques from numerical linear algebra. The following notation will be used.

Let $\zbF:\Omega\subseteq\R^p\longrightarrow \R^q$ be of class $C^2$ on the open set $\Omega$, 
$\zbF(\zbx)=\begin{pmatrix} \zbF_1(\zbx), \ldots, \zbF_q(\zbx)\end{pmatrix}^T$. 
The first and the second derivative at $\zbx\in\Omega$,  
$D\zbF(\zbx): \R^p\longrightarrow\R^q$ and $D^2\zbF(\zbx): \R^p\times \R^p \longrightarrow\R^q$,
are used for local approximation of $\zbF$ as 
\begin{eqnarray}
	&& \zbF(\zbx+\Delta\zbx) = \zbF(\zbx) + D\zbF(\zbx)\Delta\zbx + \frac{1}{2}D^2\zbF(\zbx)(\Delta\zbx,\Delta\zbx) + o(\|\Delta \zbx\|_{\R^p}^2) , \nonumber\\
	&& \| \zbF(\zbx+\Delta\zbx) -  \zbF(\zbx) - D\zbF(\zbx)\Delta\zbx \|_{\R^q} \leq 
	\frac{1}{2}\sup_{\|\zby-\zbx\|_{\R^p}\leq \|\Delta\|_{\R^p}} \|D^2\zbF(\zby)\|\|\Delta\zbx\|_{\R^p}^2 , \label{eq:Taylor-F-2}
\end{eqnarray}
where the bilinear operator $D^2\zbF(\zbx)(\zbu,\zbv)$ acts as 
$$
\R^p\times \R^p \ni (\zbu,\zbv) \mapsto D^2\zbF(\zbx)(\zbu,\zbv) = \begin{pmatrix} \zbu^T D^2\zbF_1(\zbx)\zbv\cr \vdots\cr 
	\zbu^T D^2\zbF_q(\zbx)\zbv\end{pmatrix} \in \R^q ,
$$
and its norm is in the pair of vector norms $\|\cdot\|_{\R^p}$, $\|\cdot\|_{\R^q}$ defined by 
$$
\mabs{D^2\zbF(\zbx)}_{p,q} = \sup_{\zbu, \zbv\neq\0} \frac{\|D^2\zbF(\zbx)(\zbu,\zbv)\|_{\R^q}}{\|\zbu\|_{\R^p}\|\zbv\|_{\R^p}} .
$$
For an operator $\zbA : \R^p\longrightarrow \R^q$, $\|\zbA\|_{p,q}$ denotes the norm compatible with  $\|\cdot\|_{\R^p}$, $\|\cdot\|_{\R^q}$: $\|\zbA \zbx\|_{\R^q}\leq \|\zbA\|_{pq} \|\zbx\|_p$. 

\begin{lemma}\label{Lemma:Palmer++}
	Let $\zbF$ be as above, and let $\zby\in\Omega$ be such that $\|\zbF(\zby)\|_{\R^q}\leq \delta$ with some $\delta > 0$, and let $D\zbF(\zby)$ be of full row rank. Choose any right inverse $\zbG_\zby$	of $D\zbF(\zby)$ ($D\zbF(\zby) \zbG_\zby=\Id_q$),  and set $\varepsilon = 2 \delta \|\zbG_y\|_{q,p}$ and 
	\begin{equation}\label{eq:D2F-My}
		M(\zbF;\zby,\varepsilon) = \sup\{ \mabs{D^2\zbF(\zbx)}_{p,q} : \zbx\in \overline{B(\zby,\varepsilon)} \},\;\;\overline{B(\zby,\varepsilon)}=\{\zbx\in\R^p : \| \zbx - \zby\|_{\R^p} \leq \varepsilon\} .
	\end{equation}
	If $\overline{B(\zby,\varepsilon)} \subset\Omega$, and $M(\zbF;\zby,\varepsilon) \leq 1/(\|\zbG_\zby\|_{q,p}\varepsilon)$, then the equation $\zbF(\zbx)=\0$ has a solution $\zbx\in \overline{B(\zby,\varepsilon)}$. Furthermore,
	\begin{itemize}
		\item The right inverse can be selected so that in the case $p>q$ the solution $\zbx$ satisfies $\zbx_{i_j}=\zby_{i_j}$, $j=1, \ldots, p-q$. 
		\item In particular, if $D\zbF(\zby)$ is a full spark matrix, then for any selection of $p-q$ indices $i_1,\ldots, i_{p-q}$, 
		there is $\zbx\in\zbF^{-1}(\{\0\})\cap \overline{B(\zby,\varepsilon)}$ that coincides  $\zby$ at $i_1,\ldots, i_{p-q}$. 
	\end{itemize}
\end{lemma}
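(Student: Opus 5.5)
The natural route is the Newton--Kantorovich-type contraction argument used for \cite[Lemma 11.4]{Palmer-Shadowing-2000}, with the right inverse $\zbG_\zby$ in place of a matrix inverse. We look for the solution in the affine form $\zbx=\zby+\zbG_\zby\zbv$, $\zbv\in\R^q$. This choice is what later gives the coordinate constraints: the solution differs from $\zby$ only in the range of $\zbG_\zby$. Substituting, $\zbF(\zby+\zbG_\zby\zbv)=\0$ is rewritten as a fixed point problem
\begin{equation*}
\zbv = \mathcal{T}(\zbv) := -\zbF(\zby) - \bigl[\zbF(\zby+\zbG_\zby\zbv)-\zbF(\zby)-D\zbF(\zby)\zbG_\zby\zbv\bigr],
\end{equation*}
using $D\zbF(\zby)\zbG_\zby=\Id_q$. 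We will work with the norm $\|\zbv\|_* := \|\zbG_\zby\zbv\|_{\R^p}$ if $\zbG_\zby$ is injective, which it is since it has a left inverse. Alternatively, we set up the map directly on $\zbw=\zbG_\zby\zbv\in\mathrm{range}(\zbG_\zby)$ via $\zbw\mapsto \zbG_\zby\mathcal{T}(\zbv)$, which is cleaner. So we define $\Psi(\zbw)=-\zbG_\zby\bigl[\zbF(\zby+\zbw)-D\zbF(\zby)\zbw\bigr]$ on the closed convex set $K=\{\zbw\in\mathrm{range}(\zbG_\zby):\|\zbw\|_{\R^p}\le\varepsilon\}$. On $\mathrm{range}(\zbG_\zby)$ we have $\zbG_\zby D\zbF(\zby)\zbw=\zbw$. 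Hence a fixed point satisfies $\zbG_\zby\zbF(\zby+\zbw)=\0$, and since $\zbG_\zby$ is injective this gives $\zbF(\zby+\zbw)=\0$.

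Next we check that $\Psi$ maps $K$ into itself. Split $\Psi(\zbw)=-\zbG_\zby\zbF(\zby)-\zbG_\zby[\zbF(\zby+\zbw)-\zbF(\zby)-D\zbF(\zby)\zbw]$. By (\ref{eq:Taylor-F-2}),
\begin{equation*}
\|\Psi(\zbw)\|_{\R^p}\le \|\zbG_\zby\|_{q,p}\delta+\tfrac12\|\zbG_\zby\|_{q,p}M\varepsilon^2\le \tfrac{\varepsilon}{2}+\tfrac{\varepsilon}{2}=\varepsilon,
\end{equation*}
using $\varepsilon=2\delta\|\zbG_\zby\|_{q,p}$ and $M\le 1/(\|\zbG_\zby\|_{q,p}\varepsilon)$, where $M=M(\zbF;\zby,\varepsilon)$. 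Here $\overline{B(\zby,\varepsilon)}\subset\Omega$ ensures that the Taylor bound applies.

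Then we show contraction, or at least continuity. Write $\Psi(\zbw_1)-\Psi(\zbw_2)=-\zbG_\zby\int_0^1[D\zbF(\zby+\zbw_2+t(\zbw_1-\zbw_2))-D\zbF(\zby)](\zbw_1-\zbw_2)\,dt$. Bounding $\|D\zbF(\zbz)-D\zbF(\zby)\|\le M\|\zbz-\zby\|\le M\varepsilon$ yields the Lipschitz constant $\|\zbG_\zby\|_{q,p}M\varepsilon\le 1$. This is the main obstacle: the hypothesis gives only constant $\le1$, not $<1$, so Banach's theorem fails in the borderline case. I would instead invoke Brouwer's fixed point theorem. $\Psi$ is continuous, and $K$ is a compact convex subset of a finite dimensional space, homeomorphic to a ball. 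This gives existence, which is all the statement claims, with no uniqueness needed. If one wants Banach, a sharper estimate along the segment (distance at most $\varepsilon$ only at endpoints, so the average of $\|\zbz-\zby\|$ along the segment is $<\varepsilon$ generically) could work, but Brouwer is the safe route.

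For the bullets: when $p>q$, let $S$ be the $q$ indices complementary to $i_1,\ldots,i_{p-q}$. We choose $\zbG_\zby$ to have zero rows at $i_1,\ldots,i_{p-q}$, i.e. $\zbG_\zby=P^T\begin{pmatrix}(D\zbF(\zby)(:,S))^{-1}\\ \0\end{pmatrix}$ after permutation. This is a right inverse provided the $q\times q$ submatrix $D\zbF(\zby)(:,S)$ is nonsingular, and full row rank guarantees such an $S$ exists. Every $\zbw\in\mathrm{range}(\zbG_\zby)$ then vanishes at those indices, so $\zbx=\zby+\zbw$ agrees with $\zby$ there. If $D\zbF(\zby)$ is full spark, every $q$ columns are independent, so any prescribed set of $p-q$ indices is admissible. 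The price is that $\varepsilon$ and the condition depend on $\|\zbG_\zby\|_{q,p}$ for that choice of $\zbG_\zby$.
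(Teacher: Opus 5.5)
Your proof is correct and is essentially the paper's argument. Your $\Psi(\mathsf w)$ is exactly $\zbH(\zby+\mathsf w)-\zby$ for the paper's auxiliary map $\zbH(\zbx)=\zby-\zbG_\zby(\zbF(\zbx)-D\zbF(\zby)(\zbx-\zby))$. Existence follows from Brouwer via the same bound $\|\zbG_\zby\|_{q,p}\bigl(\delta+\tfrac12 M(\zbF;\zby,\varepsilon)\varepsilon^2\bigr)\le\varepsilon$. The coincidence at $p-q$ indices comes from the same sparse right inverse $\zbG_\zby=\Pi\bigl(\begin{smallmatrix}[D\zbF(\zby)\Pi(:,1:q)]^{-1}\\ \0\end{smallmatrix}\bigr)$.

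The only cosmetic difference is in how the fixed point is used. You restrict the domain to $\mathrm{range}(\zbG_\zby)$ and recover $\zbF=\0$ from injectivity of $\zbG_\zby$. The paper instead works on the whole ball $\overline{B(\zby,\varepsilon)}$, applies $D\zbF(\zby)$ from the left, and reads off $\zbx-\zby\in\mathrm{range}(\zbG_\zby)$ directly from the fixed-point equation.
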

{\sc Proof} Following \cite{Palmer-Shadowing-2000}, the strategy is to rephrase the equation $\zbF(\zbx)=\0$ as the fixed point problem $\zbH(\zbx)=\zbx$, with the auxiliary function $\zbH(\zbx)=\zby - \zbG_\zby (\zbF(\zbx)-D\zbF(\zby)(\zbx-\zby))$. Indeed, $\zbH(\zbx)=\zbx$
	can be expressed as
	\begin{equation}\label{eq:Hx-x}
		\zbx - \zby = -\zbG_y (\zbF(\zbx) - D\zbF(\zby)(\zbx-\zby)),
	\end{equation}
	and applying $D\zbF(\zby)$ from the left yields (because $\zbG_\zby$ is a right inverse) $\zbF(\zbx)=\0$.
	
	The Brouwer fixed point theorem will assure existence of the fixed point of $\zbH$ in $\overline{B(\zby,\varepsilon)}$ if $\zbH ( \overline{B(\zby,\varepsilon)} ) \subseteq \overline{B(\zby,\varepsilon)}$. To check that, take $\zbx\in\overline{B(\zby,\varepsilon)}$ and estimate, using (\ref{eq:Taylor-F-2}), (\ref{eq:D2F-My}), 
	\begin{eqnarray*}
		\| \zbH(\zbx) - \zby\|_{\R^p} &\leq& \|\zbG_y\|_{q,p} \|\zbF(\zbx) - \zbF(\zby) - D\zbF(\zby)(\zbx-\zby) + \zbF(\zby)\|_{\R^q} \\
		&\leq& \|\zbG_y\|_{q,p} ( \frac{1}{2} M(\zbF;\zby,\varepsilon) \|\zbx-\zby\|_{\R^p}^2 + \delta) \\
		&\leq& \|\zbG_y\|_{q,p} M(\zbF;\zby,\varepsilon) \frac{\varepsilon^2}{2} + \|\zbG_y\|_{q,p}\delta \leq \varepsilon,\;\;\mbox{i.e.}\;\;
		\zbH(\zbx)\in \overline{B(\zby,\varepsilon)}.
	\end{eqnarray*}
	Hence, there is $\zbx\in \overline{B(\zby,\varepsilon)}$ such that $\zbH(\zbx)=\zbx$ and $\zbF(\zbx)=\0$. 
	
	The relation $D\zbF(\zby) \zbG_\zby=\Id_q$ implies that $D\zbF(\zby)$ is necessarily of full row rank (its left nullspace must be trivial), and as already noted $\zbG_\zby$ is of full column rank. This means that the assumption on the existence of the right inverse contains tacit assumption that $q\leq p$. If $q=p$, then $\zbG_y=D\zbF(\zby)^{-1}$. But, if $q<p$ then two immediate observations are that $\zbF$ cannot be injective (hence, it allows multiple solutions to $\zbF(\zbx)=\0$) and that there are infinitely many right inverses of $D\zbF(\zby)$. The immediate choice is the Moore--Penrose generalized inverse, $\zbG_y=D\zbF(\zby)^\dagger$, which is of minimal Frobenius norm among all possible choices.
	In this context, particularly interesting are sparse inverses---those with many zero entries--- that can be constructed as follows. Since $D\zbF(\zby)$ is of full row rank, there is permutation matrix $\Pi$ such that the leading $q\times q$ submatrix of $D\zbF(\zby)\Pi$ is nonsingular. Now, the condition $D\zbF(\zby)\zbG_y=\Id_q$ is satisfied with 
	$$
	\zbG_\zby=\Pi \begin{pmatrix} [D\zbF(\zby)\Pi(:,1:q)]^{-1} \cr \0\end{pmatrix}.
	$$
	Applying $\Pi^T$ to the relation (\ref{eq:Hx-x}), where $\zbx$ is the fixed point,  yields
	\begin{equation}\label{eq:Hx-x-2}
		\Pi^T(\zbx - \zby) = - \begin{pmatrix} [D\zbF(\zby)\Pi(:,1:q)]^{-1} \cr \0\end{pmatrix}(\zbF(\zbx) - D\zbF(\zby)(\zbx-\zby)),
	\end{equation}
	i.e. $\zbx$ and $\zby$ coincide at those indices recorded in $\Pi(:,q+1:p)$. The permutation $\Pi$ is determined using the \qr{} factorization with column pivoting,
	$$
	D\zbF(\zby) \Pi = \zbQ \begin{pmatrix} \zbR_{[1]} & \zbR_{[2]}\end{pmatrix},\;\;\zbR_{[1]}\in\R^{q\times q},\;\zbR_{[2]}\in\R^{q\times (p-q)},\;
	\zbQ^T\zbQ = \zbQ\zbQ^T=\Id_q .
	$$
	The pivoting permutation $\Pi$ is determined dynamically with the task to bring the most important linearly independent columns of $D\zbF(\zby)$ to the leading $q$ positions in
	$D\zbF(\zby) \Pi$, optimally with the inverse of small norm. In practice, the most frequently used is the  Businger--Golub pivoting \cite{bus-gol-65}, which guarantees that $\zbR_{[1]}$ has a strong form of diagonal dominance: $|(\zbR_{[1]})_{ii}|\geq \|\zbR_{[1]}(i:j,j)\|_2$, $j=i,\ldots, q$. Theoretically the best---but NP hard---is the volume maximizing pivoting \cite{Knuth-SemiOptBases-1985}, \cite{Goreinov19971}, \cite{mgu-eis-96}, \cite{Massei-maxVol-2021} that ensures maximal matrix volume of $D\zbF(\zby)\Pi(:,1:q)$ among all $q\times q$ submatrices of $D\zbF(\zby)$. It should be noted that in this construction, 
	each $\Pi$ yields an inverse $\zbG_\zby$ with its norm $\|\zbG_\zby\|$ that is a parameter in the construction of $\zbx$.
	Hence, the matching indices (where $\Pi^T(\zbx-\zby)=\0$) are any $q$ column indices $i_1,\ldots, i_{p-q}$ such that deleting the corresponding columns from $D\zbF(\zby)$ leaves a
	$q\times q$ nonsingular matrix. 
\hfill$\boxtimes$

\begin{remark}
	{\em
		In the space of $q\times p$ matrices with $p>q$ the set of full spark matrices (all $q\times q$ submatrices nonsingular) is open and dense in $\zbR^{q\times p}$ (it is open in the Zariski topology). The classical example of full spark matrix is a $q\times p$ Vandermonde matrix with $p$ distinct nodes.
		With $\zby$ fixed, the full spark property of $D\zbF(\zby)$ is generic in the compact--open topology of $C^1(\Omega,\R^q)$.
	}
\end{remark}

\subsubsection{Shadowing with contacts}\label{SSS=ShadowCOntactBackStable}
We now prove an extension of the numerical shadowing lemma \cite[Theorem 11.3]{Palmer-Shadowing-2000} that in addition to the existence of a shadowing true trajectory to a pseudo--trajectory asserts that certain number of contacts of the two trajectories is possible.

The proof relies on Lemma \ref{Lemma:Palmer++}, and to facilitate its direct application, trajectories will be reshaped into column vectors using the $\vecop(\cdot)$ operator, e.g.
a trajectory $\y=\begin{pmatrix} \zby_1 & \ldots & \zby_{M+1}\end{pmatrix}$ will be identified with the vector $\vecop(\y)$ in $\R^p$, $p={(M+1)n}$, $\vecop(\y)=\begin{pmatrix} \zby_1^T & \ldots & \zby_{M+1}^T\end{pmatrix}^T$.  This isomorphism is used to define the norm $\|\vecop(\y)\|_{\R^p}=\max_{k=1:M+1}\|\zby_k\|$, where $\|\cdot\|$ is the norm in $\R^n$ in which $\y$ is $\delta$--pseudo--trajectory. In the same way, trajectory with $M$ elements is a vector in $\R^q$ with $q=M n$; the norm $\|\cdot\|_{\R^q}$ is defined analogously. These transformations of trajectories into column vectors are just for convenience and for a direct application of Lemma \ref{Lemma:Palmer++} in the proof of a new shadowing result. Without loss of rigor, the notation can be simplified by identifying $\vecop(\y)\equiv\y$. The components of block--partitioned $\y$ in this notation are $\zby_k=\y((k-1)n+1:kn)$, $k=1,\ldots, M+1$.

A trajectory $\x$ of (\ref{eq:DDS}) can be conveniently described as the zero of the function $\zbF: \R^p\longrightarrow \R^q$ defined by
$$
\zbF(\x)\equiv \zbF(\left(\begin{smallmatrix} \zbx_1 \cr \vdots\cr \zbx_k \cr \vdots\cr \zbx_{M+1}\end{smallmatrix}\right)) = \left( \begin{smallmatrix} \zbx_2-\DDS(\zbx_1) \cr \vdots \cr \zbx_{k+1}-\DDS(\zbx_k) \cr \vdots \cr \zbx_{M+1}-\DDS(\x_M)\end{smallmatrix}\right),
$$
and a $\delta$--pseudo--trajectory $\y$ is characterized simply by $\|\zbF(\y)\|_{\R^q}\leq \delta$.
\begin{theorem}\label{TM:palmer++}
	Let $\DDS : \R^n\longrightarrow\R^n$ be of class $C^2$ and let $\y=(\zby_1,\ldots,\zby_{M+1})\in\R^{n\times (M+1)}$ be its $\delta$--pseudo--trajectory, $\max_k \|\zby_{k+1}-\DDS(\zby_k)\|\leq \delta$. Choose a right inverse $\zbG_\y$ of $D\zbF(\y)$ and set $\varepsilon=2\|\zbG_\y\|_{q,p}\delta$, and 
	\begin{equation}\label{eq:TM:My}
		M(\DDS;\y,\varepsilon) = \sup \{ \mabs{D^2\DDS(\zbx)}_{p,q} : \zbx\in \bigcup_{k=1}^{M+1} \overline{B(\zby_k,\varepsilon)}\}.
	\end{equation}
	Assume that $M(\DDS;\y,\varepsilon)\|\zbG_\y\|_{q,p} \varepsilon \leq 1$.
	Then:
	\begin{enumerate} 
		\item The pseudo--trajectory $\y$ is $\varepsilon$--shadowed by a true trajectory $\x=(\zbx_1,\ldots,\zbx_{M+1})$ of $\DDS$.
		\item There is a trajectory $\x$ that $\varepsilon$--shadows $\y$  and, moreover, $\x$ and $\y$ coincide at at  least $n$ positions. In particular, 
		\begin{itemize}
			\item There is always a true trajectory $\x$ such that $\zbx_1=\zby_1$. 
			\item If all Jacobians $D\DDS(\zby_1), \ldots, D\DDS(\zby_M)$ are nonsingular, then there is a true trajectory $\x$ such that $\zbx_{M+1}=\zby_{M+1}$.
		\end{itemize}
	\end{enumerate}
\end{theorem}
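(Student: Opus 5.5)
The plan is to apply Lemma \ref{Lemma:Palmer++} to the map $\zbF:\R^p\longrightarrow\R^q$, $p=(M+1)n$, $q=Mn$, whose zeros are exactly the true trajectories. The block rows of $\zbF$ are $\zbF_k(\x)=\zbx_{k+1}-\DDS(\zbx_k)$. With the block max norms, the hypothesis $\max_k\|\zby_{k+1}-\DDS(\zby_k)\|\leq\delta$ is precisely $\|\zbF(\y)\|_{\R^q}\leq\delta$.

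The Jacobian $D\zbF(\y)$ is the $q\times p$ block bidiagonal matrix with blocks $-D\DDS(\zby_k)$ in block column $k$ and $\Id_n$ in block column $k+1$ of block row $k$. Deleting the first block column leaves a block lower triangular matrix with identity diagonal blocks, which is nonsingular. Hence $D\zbF(\y)$ always has full row rank and a right inverse $\zbG_\y$ exists.

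For the second derivative, $D^2\zbF(\x)(\zbu,\zbv)$ has $k$-th block $-D^2\DDS(\zbx_k)(\zbu_k,\zbv_k)$. In the max block norms this gives $\mabs{D^2\zbF(\x)}_{p,q}\leq\max_k\mabs{D^2\DDS(\zbx_k)}$, since $\|\zbu_k\|\leq\|\zbu\|_{\R^p}$. If $\x\in\overline{B(\y,\varepsilon)}$ in $\R^p$, then each $\zbx_k\in\overline{B(\zby_k,\varepsilon)}$, so $M(\zbF;\y,\varepsilon)\leq M(\DDS;\y,\varepsilon)$. Since $\DDS$ is defined on all of $\R^n$, the ball lies in $\Omega=\R^p$. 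The assumption $M(\DDS;\y,\varepsilon)\|\zbG_\y\|\varepsilon\leq1$ then verifies the hypothesis of Lemma \ref{Lemma:Palmer++}. That lemma yields a zero $\x$ of $\zbF$ with $\max_k\|\zbx_k-\zby_k\|\leq\varepsilon$, which proves item 1.

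For item 2, I use the sparse right inverse construction from the proof of Lemma \ref{Lemma:Palmer++}. There, a permutation $\Pi$ brings a nonsingular $q\times q$ submatrix to the front, and the fixed point then coincides with $\y$ on the $p-q=n$ remaining coordinates. Choosing $\Pi$ so that the deleted columns are block column $1$ gives the nonsingular submatrix exhibited above, so $\zbx_1=\zby_1$.

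Choosing instead the last block column to be deleted leaves the block upper bidiagonal matrix with diagonal blocks $-D\DDS(\zby_k)$, $k=1,\ldots,M$. This is nonsingular exactly when all these Jacobians are nonsingular, and it gives $\zbx_{M+1}=\zby_{M+1}$. In both cases $\varepsilon$ must be defined with this specific $\zbG_\y$. This is consistent with the statement, because there the right inverse is chosen first and then $\varepsilon$ is set.

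The step I expect to need most care is the norm bookkeeping: checking that the block max norms make $\mabs{D^2\zbF}_{p,q}$ bounded by the componentwise sup over the union of balls. A secondary point of care is confirming that the contact conclusion of Lemma \ref{Lemma:Palmer++} is compatible with the given $\varepsilon$, since $\varepsilon$ depends on the chosen $\zbG_\y$. Everything else is direct substitution into Lemma \ref{Lemma:Palmer++}.
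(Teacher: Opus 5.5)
Your proposal is correct and follows essentially the same route as the paper: you reduce to Lemma~\ref{Lemma:Palmer++} via the block max norms and the bound $M(\zbF;\y,\varepsilon)\le M(\DDS;\y,\varepsilon)$. You then get the contacts $\zbx_1=\zby_1$ and $\zbx_{M+1}=\zby_{M+1}$ from the sparse right inverses obtained by deleting the first or the last block column of $D\zbF(\y)$, and your caveat that $\varepsilon$ must be computed with that particular $\zbG_\y$ is exactly the remark the paper makes at the end of its proof.
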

{\sc Proof}
	The Jacobian and the Hessian of $\zbF$ are identified from
	$$
	\zbF(\x+\delta\x) \! =\! \begin{pmatrix} \zbx_2+\delta\zbx_2-\DDS(\zbx_1) - D\DDS(\zbx_1)\delta\zbx_1 - D^2\DDS(\zbx_1)(\delta\zbx_1,\delta\zbx_1) + o(\|\delta\zbx_1\|^2)\cr \vdots \cr \zbx_{k+1}+\delta\zbx_{k+1}-\DDS(\zbx_k)-D\DDS(\zbx_k)\delta\zbx_k - D^2\DDS(\zbx_k)(\delta\zbx_k,\delta\zbx_k) + o(\|\delta\zbx_k\|^2)\cr \vdots \cr \zbx_{M+1}+\delta\zbx_{M+1}-\DDS(\x_M) - D\DDS(\x_M)\delta\x_M - D^2\DDS(\zbx_M)(\delta\zbx_M,\delta\zbx_M) + o(\|\delta\zbx_M\|^2)\end{pmatrix}
	$$
	as 
	\begin{eqnarray}
		D\zbF(\x) &=& \begin{pmatrix} 
			-D\DDS(\zbx_1) & \Id_n & & & \cr 
			& -D\DDS(\zbx_2) & \ddots &  & & \cr 
			& & \ddots & \Id_n & \cr 
			& & & -D\DDS(\zbx_M) & \Id_n
		\end{pmatrix}, \label{eq:DF-D2F}\\
		D^2\zbF(\x)(\delta\x,\delta\x)&=&\begin{pmatrix} D^2\DDS(\zbx_1)(\delta\zbx_1,\delta\zbx_1) \cr 
			D^2\DDS(\zbx_2)(\delta\zbx_2,\delta\zbx_2) \cr
			\vdots \cr
			D^2\DDS(\zbx_M)(\delta\zbx_M,\delta\zbx_M)\end{pmatrix}. \label{eq:DF-D2F-b}
	\end{eqnarray}
	By the assumption, $\|\zbF(\y)\|_q \leq \delta$. Clearly, $\x\in\R^p$ is a trajectory of $\DDS$ if and only if $\zbF(\x)=\0$.
	The assertion of the theorem is that
	$\zbF$ has zero in an $\varepsilon$ ball centered at $\y$, so the proof reduces to an application of Lemma \ref{Lemma:Palmer++}. 
	
	To that end, let $\zbG_{\y}$ be a right inverse of $D\zbF(\y)$ and $\varepsilon= 2 \|\zbG_{\y}\|_{q,p} \delta$. Lemma \ref{Lemma:Palmer++} guarantees existence of $\x\in \overline{B(\y,\varepsilon)}$ such that $\zbF(\x)=\0$ if $M(\zbF;\y,\varepsilon)\|\zbG_{\y}\|_{q,p}\varepsilon\leq 1$, and it remains to see how 
	$M(\DDS;\y,\varepsilon)$ given in (\ref{eq:TM:My}) relates to $M(\zbF;\y,\varepsilon)$. First, note that the definition of the norm implies that $\x\in \overline{B(\y,\varepsilon)}$ is equivalent to $\zbx_k\in \overline{B(\zby_k,\varepsilon)}$ for all $k$. 
	The formulas (\ref{eq:DF-D2F},\ref{eq:DF-D2F-b}) and the definition of the norm imply
	$$
	\mabs{D^2\zbF(\x)} \leq \max_k \mabs{D^2\DDS(\zbx_k)} = 
	\mabs{D^2\DDS(\zbx_{k_*})} \leq \sup\{ \mabs{D^2\DDS(\zbx)} : \zbx\in \overline{B(\zby_{k_*},\varepsilon)} \},
	$$
	and thus $M(\zbF;\y,\varepsilon)\leq M(\DDS;\y,\varepsilon)$. Lemma \ref{Lemma:Palmer++} applies and $\y$ is $\varepsilon$--shadowed by $\x$.
	
	To prove the second part, first note that $D\zbF(\y)$ given in (\ref{eq:DF-D2F}) is $q\times p\equiv Mn\times (M+1)n$ of full row rank $Mn$. Further, independent of $\y$, the submatrix $D\zbF(\y)(1:q,n+1:p)$ is $q\times q$ nonsingular:
	\begin{eqnarray*}
		D\zbF(\y)(1:q,n+1:p) &=& \left(\begin{smallmatrix} 
			\Id_n & & & & \cr
			-D\DDS(\zby_2) & \Id_n &  & & \cr
			& -D\DDS(\zby_3) & \Id_n & \cr
			&  & \ddots & \ddots & \cr
			&  & & -D\DDS(\zby_M) & \Id_n
		\end{smallmatrix}\right), \\ 
		(D\zbF(\y)(1:q,n+1:p)^{-1})_{[ij]} &=&\left\{ \begin{array}{lc}
			\Id_n, & i=j \cr
			\0, & i < j	\cr
			D\DDS(\zby_i)D\DDS(\zby_{i-1})\cdots D\DDS(\zby_{j+1}),  & i > j
		\end{array}\right. .
	\end{eqnarray*}
	(Here $(\cdot)_{[ij]}$ denotes the $(i,j)$th element in the block partition.) For example, in the $4\times 4$ block partition,
	$$
	D\zbF(\y)(1:q,n+1:p)^{-1}=
	\left( \begin{smallmatrix} 
		\Id_n & \0 & \0 & \0 \cr
		D\DDS(\zby_2) & \Id_n & \0 & \0 \cr
		D\DDS(\zby_3)D\DDS(\zby_2) & 	D\DDS(\zby_3) & \Id_n & \0\cr
		D\DDS(\zby_4)D\DDS(\zby_3)D\DDS(\zby_2) & D\DDS(\zby_4)D\DDS(\zby_3) & D\DDS(\zby_4) & \Id_n
	\end{smallmatrix}\right).
	$$
	Hence, by the second part of Lemma \ref{Lemma:Palmer++}, it is possible to have $\zbF(\x)=\0$ (a true trajectory) with $\zbx_1=\zby_1$. (Recall that by the definition of  $\vecop(\cdot)$, $\x(1:n)=\zbx_1$, $\y(1:n)=\zby_1$.) 
	
	Now, consider removing the last $n$ columns of $D\zbF(\y)$:
	\begin{eqnarray*}
		&& D\zbF(\y)(1:q,p-n)=\left(\begin{smallmatrix}
			-D\DDS(\zby_1) & \Id_n & & & \cr
			& -D\DDS(\zby_2) & \Id_n & & \cr
			&  & \ddots & \ddots &  \cr
			&  &  & -D\DDS(\zby_{M-1}) & \Id_n \cr
			&  &  &  & -D\DDS(\zby_M)
		\end{smallmatrix}\right),\\
		&&\det(D\zbF(\y)(1:q,p-n)) = (-1)^{nM}\prod_{k=1}^M \det(D\DDS(\zby_k)) .
	\end{eqnarray*} 
	If this matrix is singular, then its inverse is 
	$$
	(D\zbF(\y)(1:q,p-n)^{-1})_{[ij]} = 
	\left\{ \begin{array}{lc}
		-(D\DDS(\zby_i))^{-1}(D\DDS(\zby_{i+1}))^{-1}\cdots (D\DDS(\zby_{j}))^{-1},  & i \leq j \cr
		\0, & i > j	\cr
	\end{array}\right. .
	$$
	In the case $M=3$ this reads
	$$
	D\zbF(\y)(1\! :\! q,p-n)^{-1} \!=\! \left(\!\begin{smallmatrix}
		-(D\DDS(\zby_1))^{-1} & -(D\DDS(\zby_1))^{-1}(D\DDS(\zby_2))^{-1} & 
		-	(D\DDS(\zby_1))^{-1}(D\DDS(\zby_2))^{-1}(D\DDS(\zby_3))^{-1} \cr
		\0 & -(D\DDS(\zby_2))^{-1} & -(D\DDS(\zby_2))^{-1}(D\DDS(\zby_3))^{-1} \cr 
		\0 & \0 & -(D\DDS(\zby_3))^{-1}
	\end{smallmatrix}\!\right)\! .
	$$
	It should be noted that selecting the intersection points adds a constraint that directly influences the norm of the right inverse $\zbG_\zby$ and thus the shadowing parameter $\varepsilon$.
\hfill$\boxtimes$

\section{Backward shadowing stability of \kmd{}/\edmd{}}\label{S=MixedBackShadowEDMD}
This section formalizes the discussion from \S \ref{SS=ConceptualFramework} in terms of the shadowing theory and proposes a numerical analysis framework for assessing accuracy of \kmd{}/\edmd{}. The goal is to turn the residuals $\zbr_i=\Koop\phi_i-\lambda_i\phi_i$ of the computed eigenpairs $(\lambda_i,\phi_i)$, $i=1,\ldots, \ell$, into a perturbation in terms of the original system (\ref{eq:DDS}). The first step is replacement of $\Koop$ with $\Koop-\Delta\Koop$ as in Theorem \ref{TM:DeltaKoop}. Next, the ideas from \S \ref{SS=ConceptualFramework} are formalized in terms of trajectories of (\ref{eq:DDS}).

Consider evaluation of the Krylov sequence $f, (\Koop-\Delta\Koop)f, (\Koop-\Delta\Koop)^2f, \ldots$ 
where the observable of interest $f$ is assumed backward stable in the following sense: \\ 

\begin{mdframed}
	\textbf{Backward stability assumption for $f$}: For a state $\zbx$ and small error $e(\zbx)$ in evaluating $f(\zbx)$, there exist small backward error $b(\zbx)$ such that $f(\zbx)+e(\zbx)=f(\zbx+b(\zbx))$. (For a construction of $b(\zbx)$ see (\ref{eq:tauf(x)}).)
\end{mdframed}
\ \\
Now, compute (for some $\widetilde\x_1$, including the case $\widetilde\x_1=\x_1$)
$$
((\Koop-\Delta\Koop)f)(\widetilde\x_1) = f(\DDS(\widetilde\x_1)) - (\Delta\Koop f)(\widetilde\x_1) = f(\DDS(\widetilde\x_1)) + e(\widetilde\x_1) = f(\DDS(\widetilde\x_1) + b(\widetilde\x_1)) ,
$$
and define $\widetilde\x_2=\DDS(\widetilde\x_1) + b(\widetilde\x_1)$, so that $f(\widetilde\x_2)=((\Koop-\Delta\Koop)f)(\widetilde\x_1)$. In the next step, 
$$
((\Koop-\Delta\Koop)^2f)(\widetilde\x_1)=((\Koop-\Delta\Koop)f)(\widetilde\x_2) = f(\DDS(\widetilde\x_2)) + e(\widetilde\x_2) = f(\DDS(\widetilde\x_2) + b(\widetilde\x_2)) = f(\widetilde\x_3),
$$
where $\widetilde\x_3=\DDS(\widetilde\x_2) + b(\widetilde\x_2)$.
Hence, once the computed eigenpairs are accepted and used in the \kmd{},
$\Koop$ is replaced with $\Koop-\Delta\Koop$ and instead of the
trajectory $\x_1, \x_2=\DDS(\x_1), \ldots, \x_{k+1}=\DDS(\x_k), \ldots$ 
the observable $f$ is evaluated along the pseudo--trajectory 
\begin{equation}\label{eq:pseudo-trajectory}
	\widetilde\x_1=\x_1, \;\;\widetilde\x_2 = \DDS(\widetilde\x_1) + b(\widetilde\x_1), \;\; \widetilde\x_3=\DDS(\widetilde\x_2)+b(\widetilde\x_2), \;\;\ldots,\;\; \widetilde\x_{k+1}=\DDS(\widetilde\x_k)+b(\widetilde\x_k), \ldots 
\end{equation}
where $b(\widetilde\x_k)$ depends on the backward stability of $f$ at $\DDS(\widetilde\x_k)$ and on the forward perturbation
\begin{equation}\label{eq:e(xk)}
	e(\widetilde\x_k) = - (\Delta\Koop f)(\widetilde\x_k) 
	= -\sum_{j=1}^\ell \langle f, \widetilde\phi_j\rangle \zbr_j(\widetilde\x_k) .
\end{equation}
Of course, of interest is convergent \edmd{}, so that with $N\rightarrow\infty$, $M\rightarrow\infty$, selection of $\ell$ eigenpairs with small residuals is feasible. \\

\begin{mdframed}
	\textbf{Numerical convergence assumption for \edmd{}}: Assume that $M$ and $N$ are large enough, and that $\Delta\Koop$ is built using $\ell$ eigenpairs with small residuals so that all $|e(\widetilde\x_k)|\leq O(\vertiii{\Delta\Koop})$, and that, based on the assumption on backward stability of $f$ and (\ref{eq:e(xk)}) (See Proposition \ref{PROP:DKAdjoint}.), all  $\|b(\widetilde\x_k)\|_{\R^n}\leq O(|e(\widetilde\x_k)|)$.
\end{mdframed}
\ \\
Note that this corresponds to evaluating the Krylov sequence 
\begin{eqnarray}
	&& f(\widetilde\x_2)=((\Koop-\Delta\Koop)f)(\widetilde\x_1),\;\; f(\widetilde\x_3)=((\Koop-\Delta\Koop)f)(\widetilde\x_2)=((\Koop-\Delta\Koop)^2f)(\widetilde\x_1),\;\; \ldots, \nonumber\\
	&& \ldots, \; f(\widetilde\x_{k+1})=((\Koop-\Delta\Koop)^k f)(\widetilde\x_1), \;\ldots \label{eq:K-DKkfx1}
\end{eqnarray}
The question is whether the iterations (\ref{eq:K-DKkfx1}) and the approximate spectral decomposition (\ref{eq:K-DK-powers}) can be interpreted as nearly analyzing/forecasting the observable $f$ along a true trajectory of $\DDS$ that is close to the pseudo--trajectory  
$(\widetilde\x_k)_{k=1}^{M+1}$ (\ref{eq:pseudo-trajectory}), where (in some suitable norm $\|\cdot\|_{\R^n}$ in $\R^n$)
\begin{equation}\label{eq:pseudo-traj-tildexk}
	\|\widetilde\x_{k+1}-\DDS(\widetilde\x_k)\|_{\R^n} \leq \delta_k,\;\;\delta_k=\|b(\widetilde\x_k)\|_{\R^n}.
\end{equation}
Theorem \ref{TM:palmer++} asserts that (\ref{eq:pseudo-traj-tildexk}) is $\varepsilon$--shadowed by a true trajectory $\widehat\x_{k+1}=\DDS(\widehat\x_k)$, $\|\widehat\x_k-\widetilde\x_k\|_{\R^n}\leq\varepsilon$ for all $k$; see Figure \ref{FIG:Shadow1}. Note that
$$
f(\widetilde\x_k) = f(\widehat\x_k) + \nabla f(\widehat\x_k)^T(\widetilde\x_k-\widehat\x_k) + o(\|\widehat\x_k-\widetilde\x_k\|_{\R^n}) = f(\widehat\x_k) + \Delta f(\widehat\x_k).
$$
Hence, $\varepsilon$ small perturbations of the values in the sequence (\ref{eq:K-DKkfx1}) will make it an exact evaluation of the observable $f$ along a nearby (shadowing) true trajectory $(\widehat\x_k)_{k=1}^{M+1}$ of the system. We call this property (\emph{mixed}) \emph{backward shadowing stability}, with the corresponding  commutative diagram shown in Figure \ref{FIG:diagramBackShadow}.
\vspace{-4mm}
\begin{figure}[H]
	\[
	\tikzcdset{
		arrows={line width=1pt, -{Stealth[length=2mm,width=1.2mm]}}
	}
	\begin{tikzcd}[column sep=huge,row sep=large]
		{\color{red}(\widetilde\x_k)_{k=1}^{M+1}}
		\arrow[r,"{(\Koop-\Delta\Koop)f}"]
		\arrow[d,"\varepsilon\mbox{{\small--shadow}}"']
		&
		f(\widetilde\x_1), \ldots, f(\widetilde\x_{M+1})
		\\
		{\color{green!60!black}(\widehat\x_k)_{k=1}^{M+1}}
		\arrow[r,"{\Koop f}"]
		&
		f(\widehat\x_1), \ldots, f(\widehat\x_{M+1})
		\arrow[u,shift left = 11, dashed,bend left=25, "{+\Delta f(\widehat\x_1), \ldots, }"{pos=0.5, right}]
		\arrow[u, shift right=8,dashed,bend left=15, "{+\Delta f(\widehat\x_{M+1}) }"{pos=0.5, right}]
		&
	\end{tikzcd}
	\]
	\vspace{-2mm}
	\caption{\label{FIG:diagramBackShadow} \emph{Commutative diagram for (mixed) backward shadowing stability} of \edmd{}: If approximate eigenpairs of $\Koop$ are used in the analysis and forecasting of the dynamics, the computation corresponds, up to small forward error, to a pseudo--trajectory that is shadowed by a true trajectory of the original system.}
\end{figure}
\begin{remark}
	{\em
		The adjective \emph{mixed} is used in numerical analysis of algorithms to emphasize that the computed output after small perturbation corresponds to exact computation with backward changed input. Many algorithms in numerical linear algebra that are called backward stable actually have this slightly weaker form of mixed stability. For example, the \qr{} factorization can compute only numerically orthogonal factor, so the computed factorization is not the \qr{} factorization. However, small correction of the computed orthogonal factor can make it exactly orthogonal and with this correction we obtain the exact \qr{} factorization of backward perturbed input matrix.
	}
\end{remark}

\noindent In this way, the errors due to using the computed (inexact) spectral data of $\Koop$ are expressed in more intrinsic way, inherent to  the original system (\ref{eq:DDS}). In particular, if the system is measure preserving, then it can be claimed that the computed numerical solution corresponds to the same measure preserving system, but along a nearby trajectory. In a data driven setting, where the input data are contaminated by noise, this proposed backward shadowing is a natural framework for numerical analysis because it pertains directly to perturbation of the data defining the problem. If the backward shadowing is tight and comparable to the uncertainty in the input, then the computed output is as good as one can hope for.   

In an ideal case, the situation can be illustrated as shown in Figure \ref{FIG:Shadow1}.
By the second part of Theorem \ref{TM:palmer++}, it is possible to select $\widehat\x_1=\widetilde\x_1$; since $\widetilde\x_1=\x_1$, the nearby trajectory can be the original one. However, as already noted in the proof of the theorem, this constraint may impact $\varepsilon$, and that another trajectory matches the pseudo--trajectory better. The accuracy of the actual output depends on the forward stability of the mapping $\DDS(\cdot)$. See Figure \ref{FIG:Shadow2}.

\begin{figure}[H]
	\begin{center}
		\begin{tikzpicture}[>=Stealth]
			
			\coordinate (x0) at (0.0,0.20);
			\coordinate (x1) at (1.0,1.10);
			\coordinate (x2) at (2.0,0.45);
			\coordinate (x3) at (3.0,1.30);
			\coordinate (x4) at (4.0,0.50);
			\coordinate (x5) at (5.0,0.95);
			\coordinate (x6) at (6.0,0.30);
			\coordinate (x7) at (7.0,0.75);
			\coordinate (x8) at (8.0,0.35);
			
			\draw[blue,densely dashed,thick]
			(x0)--(x1)--(x2)--(x3)--(x4)--(x5)--(x6)--(x7)--(x8);
			
			\foreach \p in {x0,x1,x2,x3,x4,x5,x6,x7,x8}
			\fill[blue] (\p) circle (2.3pt);

			\coordinate (y0) at (-0.05,0.25);
			\coordinate (y1) at (1.02,1.05);
			\coordinate (y2) at (2.05,0.55);
			\coordinate (y3) at (3.05,1.45);
			\coordinate (y4) at (4.10,0.90);
			\coordinate (y5) at (5.10,1.60);
			\coordinate (y6) at (6.10,1.20);
			\coordinate (y7) at (7.10,1.85);
			\coordinate (y8) at (8.15,1.70);
			
			\draw[red,densely dashed,thick]
			(y0)--(y1)--(y2)--(y3)--(y4)--(y5)--(y6)--(y7)--(y8);
			
			\foreach \p in {y0,y1,y2,y3,y4,y5,y6,y7,y8}
			\filldraw[red] (\p) rectangle +(0.10,0.10);

			\coordinate (z0) at (0.05,0.15);
			\coordinate (z1) at (1.08,1.00);
			\coordinate (z2) at (2.10,0.62);
			\coordinate (z3) at (3.00,1.38);
			\coordinate (z4) at (4.15,0.82);
			\coordinate (z5) at (5.05,1.52);
			\coordinate (z6) at (6.15,1.12);
			\coordinate (z7) at (7.05,1.78);
			\coordinate (z8) at (8.10,1.62);
			
			\draw[green!60!black,densely dashed,thick]
			(z0)--(z1)--(z2)--(z3)--(z4)--(z5)--(z6)--(z7)--(z8);
			
			\foreach \p in {z0,z1,z2,z3,z4,z5,z6,z7,z8}
			\filldraw[green!60!black,rotate=45]
			(\p) rectangle +(0.10,0.10);

			\foreach \a/\b in {
				y0/z0,
				y3/z3,
				y5/z5,
				y8/z8}
			{
				\draw[gray,->] (\a)--(\b);
			}
			
			\node[blue] at (9.45,0.35) {exact trajectory};
			\node[red] at (9.75,1.80) {pseudo--trajectory};
			\node[green!60!black] at (9.50,1.45) {shadowing trajectory};
			
			
		\end{tikzpicture}	
	\end{center}
	\caption{\label{FIG:Shadow2} It is possible that $\DDS$ in (\ref{eq:DDS}) is forward unstable and  the pseudo--trajectory drifts away from the original trajectory, but remains close to another true trajectory with slightly changed initial condition.}	
\end{figure}
\noindent 
This discussion aims to establish, with minimum of technical details, a new framework for the numerical analysis of the computational \emph{koopmanism}, in particular of the \edmd{}/\kmd{}. 
More detailed technical analysis will have to take into account the following issues from numerical linear algebra, operator theory and dynamical systems theory:

\noindent \emph{(i)} In (\ref{eq:pseudo-traj-tildexk}), the norm $\|\cdot\|_{\R^n}$ is not specified. Although all norms on $\R^n$ are equivalent, the choice of suitable norm might be advantageous from the practical point of view.\\  
\emph{(ii)} For more detailed and concrete analysis and more in depth exploration of the shadowing theory, convergence and backward error analysis of the \edmd{}, and spectral perturbation theory of closed linear operators in the computational \emph{koopmanism}, it is necessary to specify the domain of $\Koop$ and the underlying Hilbert space structure. \\
\emph{(iii)} Here the mapping $\DDS$ is assumed as in Theorem \ref{TM:palmer++}, and for more general statements the analysis needs to be done in the framework of the dynamical systems theory. 

The last part of this work, \S \ref{S=DataDrivenResids}, addresses the practical question of computing
the norm of $\Delta\Koop$ in a purely data driven scenario.

\section{Data driven residuals}\label{S=DataDrivenResids}
In a purely data driven scenario, the residuals can only be approximated, based on the supplied data, and this section shows how to compute the residual norms
$\|\zbr_i\|$  and $\vertiii{\Delta\Koop}$. 
Assuming the $L^2$ structure\footnote{See \S \ref{S=Preliminaries}.} in $\FF$, the norm of $\zbr_i$ is 
\begin{equation}\label{eq:Kphi-residual}
	\frac{\|\zbr_i\|}{\|\phi_i\|} = \frac{\|\Koop\phi_i - \lambda_i\phi_i\|}{\|\phi_i\|} = \frac{(\int_{\mathcal{X}} |(\Koop\phi_i)(\zbx) - \lambda_i\phi_i(\zbx)|^2 d\mu(\zbx))^{1/2}}{(\int_{\mathcal{X}} |\phi_i(\zbx)|^2d\mu(\zbx))^{1/2}},
\end{equation}
and $\vertiii{\Delta\Koop}$ is approximated using Proposition \ref{PROP:DeltaKnorm}. The individual residuals can be used to discard the spurious eigenpairs and the aggregated residuals of the selected ones, whose norm can be estimated using (\ref{eq:normDK-estimate}), allows for a backward stability analysis in the sense of the discussion in \S \ref{SEC:ResidsBackwardpert} and \S \ref{S=MixedBackShadowEDMD}.

\begin{proposition}\label{PROP-residual-new}
	For an approximate eigenpair $(\lambda_i,\phi_i)$, the residual (\ref{eq:Kphi-residual}) can be estimated as 	
	\begin{equation}\label{eq-eigfun-3}
		\|\Koop \phi_i - \lambda_i\phi_i\| \approx \| \zbW^{1/2} (\OY\bfss_i - \lambda_i\OX\bfss_i)\|_2,
	\end{equation}
	where 
	$$
	\|\phi_i\|=\|\GramM^{1/2} [\phi_i]_{\mathcal B}\|_2\approx \| \zbW^{1/2}\OX \bfss_i\|_2.
	$$
	Let $\bfR$ be the matrix obtained by stacking the residuals column--wise,
	$
	\bfR(:,i) = \lambda_i \OX\bfss_i - \OY\bfss_i,\; i=1,\ldots, \ell,
	$	
	and let $\widetilde\Phi_x$ the matrix of tabulated values of $\widetilde\phi_1, \ldots, \widetilde\phi_\ell$,  as explained in \S \ref{SSS=tildephis}. Then
	\begin{equation}\label{eq:normDK-estimate}
		\vertiii{\Delta\Koop} \approx \sqrt{\lambda_{\max}((\widetilde\Phi_x^*\zbW \widetilde\Phi_x) (\bfR^*\zbW \bfR))}=\sigma_{\max}(\sqrt{\bfR^*\zbW \bfR}\widetilde\zbL),
	\end{equation}
	where $\widetilde\zbL$ is the lower triangular Cholesky factor of $\widetilde\Phi_x^*\zbW \widetilde\Phi_x$.
\end{proposition}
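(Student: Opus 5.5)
The plan is to treat the weighted discrete sum over the snapshots as a cubature approximation of the $L^2(\mathcal{X},\mu)$ inner product, i.e. $\langle g,h\rangle \approx \sum_{k=1}^M w_k g(\x_k)\overline{h(\x_k)}$, exactly as in the construction of $\KoopM_N$ in Proposition \ref{PROP-KFN}. Every continuous quantity in (\ref{eq:Kphi-residual}) and in Proposition \ref{PROP:DeltaKnorm} is then replaced by its discrete counterpart evaluated on tabulated values.

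\textbf{Residual norms.} Start from $\phi_i=\Basef\bfss_i$ as in (\ref{eq-eigfun}). Then $(\Koop\phi_i)(\x_k)=\phi_i(\DDS(\x_k))=\phi_i(\y_k)=(\OY\bfss_i)_k$ and $\phi_i(\x_k)=(\OX\bfss_i)_k$. Hence the vector of values of $\zbr_i$ at $\x_1,\ldots,\x_M$ is exactly $\OY\bfss_i-\lambda_i\OX\bfss_i$, which is $-\bfR(:,i)$. Substituting this into the cubature for $\int_{\mathcal{X}}|\zbr_i|^2d\mu$ gives (\ref{eq-eigfun-3}).

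The denominator is handled the same way. One has $\|\phi_i\|^2=\bfss_i^*\GramM\bfss_i=\|\GramM^{1/2}[\phi_i]_{\mathcal B}\|_2^2$, and since $\GramM\approx\OX^*\zbW\OX$ entrywise by the same cubature, $\|\phi_i\|\approx\|\zbW^{1/2}\OX\bfss_i\|_2$. Note that this uses only the evaluation identity at the data points and is exact up to the quadrature error.

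\textbf{Norm of $\Delta\Koop$.} Proposition \ref{PROP:DeltaKnorm} gives $\vertiii{\Delta\Koop}^2=\lambda_{\max}(\widetilde\GramM\zbM_{\zbR})$, with $(\zbM_{\zbR})_{ij}=\langle\zbr_j,\zbr_i\rangle$ and $\widetilde\GramM_{ij}=\langle\widetilde\phi_i,\widetilde\phi_j\rangle$. Each Gram entry is replaced by its discrete version:
\begin{itemize}
\item $\langle\zbr_j,\zbr_i\rangle\approx(\bfR^*\zbW\bfR)_{ij}$, where the sign convention in $\bfR$ is irrelevant because it cancels in the Gram matrix;
\item $\langle\widetilde\phi_i,\widetilde\phi_j\rangle$ is approximated by the corresponding entry of $\widetilde\Phi_x^*\zbW\widetilde\Phi_x$, with $\widetilde\Phi_x=\OX\widetilde\bfSS_\ell$ (or $\OX\widehat\bfSS_\ell$) from \S\ref{SSS=tildephis}.
\end{itemize}
The index convention needs care here: $\widetilde\GramM$ as defined is the transpose (complex conjugate) of the discrete Gram matrix. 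Since eigenvalues are invariant under the similarity involved, I would check that $\lambda_{\max}$ is unaffected, or else adopt the convention consistent with $\zbM_{\zbR}$.

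The second equality in (\ref{eq:normDK-estimate}) is pure linear algebra. Write $\widetilde\Phi_x^*\zbW\widetilde\Phi_x=\widetilde\zbL\widetilde\zbL^*$. Then $\widetilde\zbL\widetilde\zbL^*\bfR^*\zbW\bfR$ is similar to the Hermitian positive semidefinite matrix
$$\widetilde\zbL^*\sqrt{\bfR^*\zbW\bfR}\sqrt{\bfR^*\zbW\bfR}\,\widetilde\zbL,$$
whose largest eigenvalue is $\sigma_{\max}(\sqrt{\bfR^*\zbW\bfR}\,\widetilde\zbL)^2$. This mirrors the proof of Proposition \ref{PROP:DeltaKnorm}.

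\textbf{Main obstacle.} The main difficulty is justifying the symbol $\approx$. The statement is an estimate rather than an identity, so I would state precisely that all errors come from the cubature consistency $\OX^*\zbW\OX\to\GramM$, $\bfR^*\zbW\bfR\to\zbM_{\zbR}$, as $M\to\infty$ (e.g. for ergodic sampling or a convergent quadrature rule, as in \cite{Colbrook-Townsend-Rigor-2024}). I would then invoke continuity of $\lambda_{\max}$ and of the Cholesky factor with respect to the Gram entries, which is valid because $\widetilde\GramM$ is positive definite. This turns the entrywise convergence into convergence of (\ref{eq:normDK-estimate}) to $\vertiii{\Delta\Koop}$.
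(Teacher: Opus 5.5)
Your proposal is correct and follows the paper's proof. You tabulate $\zbr_i$ on the snapshots, replace each Gram matrix by its $\zbW$-weighted discrete counterpart, and invoke Proposition~\ref{PROP:DeltaKnorm} together with the Cholesky similarity. The one difference is in the tabulation: you read off $\phi_i(\y_k)-\lambda_i\phi_i(\x_k)$ directly, whereas the paper goes through the $\rho_j$'s and uses $\KoopM_N\bfss_i=\lambda_i\bfss_i$. Your version therefore also holds for any scalar $\lambda_i$, e.g.\ a Rayleigh quotient.

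On the index convention, only your second option is valid. Conjugating one Hermitian factor is not a similarity, and it can change $\lambda_{\max}$ when the $\widetilde\phi_j$ are complex. The second option is the right one anyway: the matrix of $\widetilde\Phi^*\widetilde\Phi$ in the canonical basis has $(i,j)$ entry $\langle\widetilde\phi_j,\widetilde\phi_i\rangle$, which is exactly what the paper uses in (\ref{eq:Phi*WPhi}).
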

{\sc Proof}
	Since $\phi_i$ is given by the dictionary as $\phi_i(\zbx)=\Basef(\zbx)[\phi_i]_{\mathcal B}$, $[\phi_i]_{\mathcal B}=\bfss_i$, the formula for $\|\phi_i\|$ follows.
	The residual $\rho_i$ evaluated at the data reads\footnote{See \S \ref{SS=AprroxEigenpairs}.}
	$\begin{pmatrix} \rho_i(\x_1), \ldots, \rho_i(\x_M)\end{pmatrix}^T=(\OX\KoopM_N-\OY)(:,i)$, and (\ref{eq-eigfun-2}) implies
	\begin{equation}\label{eq:ri(x1-xM)}
		\begin{pmatrix}\bfr_i(\x_1) & \ldots & \bfr_i(\x_M)
		\end{pmatrix}^T = (\OX\KoopM_N - \OY)\bfss_i = \lambda_i \OX\bfss_i - \OY\bfss_i.
	\end{equation}
	Hence
	\begin{eqnarray*}
		\|\Koop \phi_i - \lambda_i\phi_i\|^2 &\approx& \sum_{k=1}^M w_k |(\Koop\phi_i)(\x_k)-\lambda_i\phi_i(\x_k)|^2 = \sum_{k=1}^M w_k |\bfr_i(\x_k)|^2 \\ 
		&=& \| \zbW^{1/2} (\OY\bfss_i - \lambda_i\OX\bfss_i)\|_2^2.
	\end{eqnarray*}
	Then, similar reasoning as above yields 
	\begin{equation}\label{eq:R*WR}
		(\bfR^*\zbW \bfR)_{ij} = \sum_{k=1}^M w_k \overline{\bfr_i(\x_k)}\bfr_j(\x_k) \approx \langle \zbr_j,\zbr_i\rangle = (\zbM_{\zbR})_{ij} .
	\end{equation}
	Let $\widetilde\phi_i(x)=\Basef(x) \widetilde\bfss_i$ (or $\widetilde\phi_i(x)=\Basef(x) \widehat\bfss_i$). Then
	\begin{equation}\label{eq:Phi*WPhi}
		(\widetilde\Phi_x^*\zbW \widetilde\Phi_x)_{ij} = \sum_{k=1}^M w_k\overline{\widetilde\phi_i(\x_k)}\widetilde\phi_j(\x_k) \approx \langle \widetilde\phi_j,\widetilde\phi_i\rangle = \widetilde\GramM_{ij}.
	\end{equation}
	The proof completes by using (\ref{eq:R*WR}), (\ref{eq:Phi*WPhi})
	in Proposition \ref{PROP:DeltaKnorm}. If $\widetilde\Phi_x^*\zbW \widetilde\Phi_x=\widetilde\zbL \widetilde\zbL^*$ is the Cholesky factorization, then $\widetilde\zbL$ can be computed as the adjoint of the upper triangular factor in the \qr{} factorization of $\zbW^{1/2}\widetilde\Phi_x$. Similarly, the semidefinite square root
	$\sqrt{\bfR^*\zbW \bfR}$ can be replaced with the Cholesky factor of 
	$\bfR^*\zbW \bfR$ computed using the \qr{} factorization of $\zbW^{1/2}\bfR$.
\hfill$\boxtimes$

\begin{remark}
	{\em
		The formula (\ref{eq-eigfun-3}) was derived in \cite{Colbrook-Townsend-Rigor-2024} by first writing 
		\begin{eqnarray*}
			\|\Koop\phi_i-\lambda_i\phi_i\|^2 &=& \langle \Koop\phi_i-\lambda_i\phi_i,\Koop\phi_i-\lambda_i\phi_i\rangle 
			\\
			&=&\langle \Koop\phi_i,\Koop\phi_i\rangle - \lambda_i\langle \phi_i,\Koop\phi_i\rangle - \overline{\lambda}_i\langle \Koop\phi_,\phi_i\rangle + |\lambda_i|^2 \langle\phi_i,\phi_i\rangle \\
			&=& \bfss_i^*\left[ (\langle \Koop\basef_j,\Koop\basef_k\rangle)_{j,k=1}^n - 
			\lambda_i (\langle \basef_j,\Koop\basef_k\rangle)_{j,k=1}^n \right. 
			\\  
			&& \left. - \overline{\lambda}_i (\langle \Koop\basef_j,\basef_k\rangle)_{j,k=1}^n 
			+ |\lambda_i|^2 \GramM \right]\bfss_i ,
		\end{eqnarray*}
		and then noticing that for large $M$ and convergent cubature formula $(\OY^*\zbW \OY)_{ij} \approx \langle \Koop\basef_j,\Koop\basef_i\rangle$.
		This yields
		\begin{eqnarray*}
			\|\Koop\phi_i-\lambda_i\phi_i\|^2 &\approx& \bfss_i^* [\OY^*\zbW \OY -\lambda_i (\OX^*\zbW\OY)^* - \overline{\lambda}_i \OX^*\zbW\OY + |\lambda_i|^2 \OX^*\zbW\OX]\bfss_i \\
			&=& \| \zbW^{1/2} (\OY\bfss_i - \lambda_i\OX\bfss_i)\|_2^2 .
		\end{eqnarray*}
		Although equivalent to this, the proof of (\ref{eq-eigfun-3}) in Proposition \ref{PROP-residual-new} more intuitively connects with the construction 	of the eigenpairs explained in \S \ref{SS=Compresionmatrix} and \S \ref{SS=AprroxEigenpairs}.
	}
\end{remark}

\section{Concluding remarks and future research}\label{S=ConcludingRem}
This work proposes a conceptual framework of backward shadowing stability analysis of the \edmd{}/\kmd{} in the context of data driven Koopman operator based analysis of nonlinear dynamics. We believe that this perspective will open a number of promising avenues for future research in development and numerical analysis of methods for computational data driven analysis in the Koopman operator framework. An immediate research direction from the dynamical systems theory point of view  is e.g. to specialize the proofs and technical details to dynamical systems and data/initial conditions that allow shadowing (e.g. Anosov systems on compact manifolds, utilizing the exponential dichotomies etc.).  Further, it would be beneficial to formulate the concept of backward shadowing stability for kernel \edmd{} and the RKHS spaces, and for computation of the Liouville operator \cite{RosenfeldKamalapurkarGrussJohnson2022}. How the gained insights improve our understanding of the \edmd{} (including its variations such as the measure preserving \edmd{} \cite{Colbrook-mpEDMD-2023} and the Koopman--Schur decomposition \cite{Drmac-Mezic-Koopman-Schur-2026}) and its applicability in difficult scenarios? 
How the problem of vanishing residuals \cite{Colbrook-AnotherLook-2024} affects the backward shadowing analysis? 
Some of these themes are already addressed in our ongoing research \cite{Drmac-Djimoti-BackShadow-2026}.

\newcommand{\noop}[1]{}

\end{document}